\renewcommand\section{\@startsection{section}{1}{\z@}%
                                  {-2.0ex \@plus -1ex \@minus -.2ex}%
                                  {2.0ex \@plus.2ex}%
                                  {\normalfont\normalsize\bfseries}}
\newcommand{\erv}{2^{2^{4k-\lg(k-2)}}}
\newcommand{\erveasy}{2^{2^{4k}}}
\newcommand{\cfsv}{2^{B(k-1)^{1/2}2^{2k}}}
\newcommand{\cfsveasy}{2^{2^{(2+o(1))k}}}
\newcommand{\squash}{{\rm squash}}
\newcommand{\TOW}{{\rm TOW}}
\newcommand{\rtkf}{R(3,k,4)}
\newcommand{\rtkt}{R(3,k)}
\newcommand{\rtkmt}{R(3,k-1)}
\newcommand{\rtwkt}{R(2,k)}
\newcommand{\rtwkmt}{R(2,k-1)}
\newcommand{\rtwkc}{R(2,k,c)}
\newcommand{\rokt}{R(1,k)}
\newcommand{\roktc}{R(1,k,c)}
\newcommand{\rtkc}{R(3,k,c)}
\newcommand{\raktreal}{R(a,k,2)}
\newcommand{\rakt}{R(a,k)}
\newcommand{\ramstuff}{R(a-1,2\uparrow^{a-1}(2k-(i+1)))}
\newcommand{\ramkt}{R(a-1,k)}
\newcommand{\ramkmt}{R(a-1,k-1)}
\newcommand{\ramkmc}{R(a-1,k-1,c)}
\newcommand{\rammkmc}{R(a-2,k-1,c)}
\newcommand{\rammkmt}{R(a-2,k-1)}
\newcommand{\rakc}{R(a,k,c)}
\newcommand{\rfokt}{R(4,k)}
\newcommand{\rfoktc}{R(4,k,c)}
\newcommand{\rfikt}{R(5,k)}
\newcommand{\rfiktc}{R(5,k,c)}
\newcommand{\Erdos}{Erd\H{o}s }
\newcommand{\Erdosns}{Erd\H{o}s}
\newcommand{\ang}[1]{\langle#1\rangle}
\newcommand{\kstar}{{\textstyle *}}
\newcommand{\nat}{{\sf N}}
\newcommand{\xvec}[1]{\ifcase 3{#1} {\ang {x_1,x_2,x_3} } \else 
\ifcase 4{#1} {\ang{x_1,x_2,x_3,x_4}} \else {\ang {x_1,\ldots,x_{#1}}}\fi\fi}
\newcommand{\yvec}[1]{\ifcase 3{#1} {\ang {y_1,y_2,y_3} } \else 
\ifcase 4{#1} {\ang{y_1,y_2,y_3,y_4}} \else {\ang {y_1,\ldots,y_{#1}}}\fi\fi}
\newcommand{\zvec}[1]{\ifcase 3{#1} {\ang {z_1,z_2,z_3} } \else 
\ifcase 4{#1} {\ang{z_1,z_2,z_3,z_4}} \else {\ang {z_1,\ldots,z_{#1}}}\fi\fi}
\newcommand{\vecc}[2]{\ifcase 3{#2} {\ang { {#1}_1,{#1}_2,{#1}_3 } } \else
\ifcase 4{#1} {\ang { {#1}_1,{#1}_2,{#1}_3,{#1}_{4} } }
\else {\ang { {#1}_1,\ldots,{#1}_{#2}}}\fi\fi}
\newcommand{\veccd}[3]{\ifcase 3{#2} {\ang { {#1}_{{#3}1},{#1}_{{#3}2},{#1}_{{#3}3} } } \else
\ifcase 4{#1} {\ang { {#1}_{{#3}1},{#1}_{{#3}2},{#1}_{#3}3},{#1}_{{#3}4} }
\else {\ang { {#1}_{{#3}1},\ldots,{#1}_{{#3}{#2}}}}\fi\fi}
\newcommand{\veccz}[2]{\ifcase 3{#2} {\ang { {#1}_0,{#1}_2,{#1}_3 } } \else
\ifcase 4{#1} {\ang { {#1}_0,{#1}_2,{#1}_3,{#1}_{4} } }
\else {\ang { {#1}_0,\ldots,{#1}_{#2}}}\fi\fi}
\newcommand{\xve}[1]{\ifcase 3{#1} {x_1,x_2,x_3} \else 
\ifcase 4{#1} {x_1,x_2,x_3,x_4} \else {x_1,\ldots,x_{#1}}\fi\fi}
\newcommand{\yve}[1]{\ifcase 3{#1} {y_1,y_2,y_3} \else 
\ifcase 4{#1} {y_1,y_2,y_3,y_4} \else {y_1,\ldots,y_{#1}}\fi\fi}
\newcommand{\zve}[1]{\ifcase 3{#1} {z_1,z_2,z_3} \else 
\ifcase 4{#1} {z_1,z_2,z_3,z_4} \else {z_1,\ldots,z_{#1}}\fi\fi}
\newcommand{\ve}[2]{\ifcase 3#2 {{#1}_1,{#1}_2,{#1}_3} \else
\ifcase 4#2 {{#1}_1,{#1}_2,{#1}_3,{#1}_{4}}
\else {{#1}_1,\ldots,{#1}_{#2}}\fi\fi}
\newcommand{\ved}[3]{\ifcase 3#2 {{#1}_{{#3}1},{#1}_{{#3}2},{#1}_{{#3}3}} \else
\ifcase 4#2 {{#1}_{{#3}1},{#1}_{{#3}2},{#1}_{{#3}3},{#1}_{{#3}4}}
\else {{#1}_{{#3}1},\ldots,{#1}_{{#3}{#2}}}\fi\fi}
\newcommand{\fuve}[3]{
\ifcase 3#2
{{#3}({#1}_1),{#3}({#1}_2,{#3}({#1}_3)} \else
\ifcase 4#2
{{#3}({#1}_1),{#3}({#1}_2),{#3}({#1}_3),{#3}({#1}_4)}
\else
{{#3}({#1}_1),\ldots,{#3}({#1}_{#2})}\fi\fi}
\newcommand{\setmathchar}[1]{\ifmmode#1\else$#1$\fi}
\newcommand{\vlist}[2]{%
	\setmathchar{%
		\compound#2\one{#2}\two
		\ifcompound
			({#1}_1,\ldots,{#1}_{#2})
		\else
			\ifcat N#2
				({#1}_1,\ldots,{#1}_{#2})
			\else
				\ifcase#2
					({#1}_0)\or
					({#1}_1)\or
					({#1}_1,{#1}_2)\or 
					({#1}_1,{#1}_2,{#1}_3)\or
					({#1}_1,{#1}_2,{#1}_3,{#1}_4)\else 
					({#1}_1,\ldots,{#1}_{#2})
				\fi
			\fi
		\fi}}
\newif\ifcompound
\def\compound#1\one#2\two{%
	\def\one{#1}
	\def\two{#2}
	\if\one\two
		\compoundfalse
	\else
		\compoundtrue
	\fi}
\newcommand{\xwe}[1]{\ifcase 3{#1} {x_1\wedge x_2\wedge x_3} \else 
\ifcase 4{#1} {x_1\wedge x_2\wedge x_3\wedge x_4} \else {x_1\wedge \cdots \wedge
x_{#1}}\fi\fi}
\newcommand{\we}[2]{\ifcase 3#2 {\ang { {#1}_1\wedge {#1}_2\wedge {#1}_3 } } \else
\ifcase 4{#1} {\ang { {#1}_1\wedge {#1}_2\wedge {#1}_3\wedge {#1}_{4} } }
\else {\ang { {#1}_1\wedge \cdots\wedge {#1}_{#2}}}\fi\fi}
\newcommand{\into}{\rightarrow}
\newcommand{\es}{\emptyset}
\newcommand{\floor}[1]{\left\lfloor{#1}\right\rfloor}
\newcommand{\union}{\cup}
\newcommand{\s}[1]{\s_{#1}}
\newcommand{\monus}{\;\raise.5ex\hbox{{${\buildrel
    \ldotp\over{\hbox to 6pt{\hrulefill}}}$}}\;}
\newcounter{savenumi}
\newtheorem{theoremfoo}{Theorem}[section] 
\newenvironment{theorem}{\pagebreak[1]\begin{theoremfoo}}{\end{theoremfoo}}
\newtheorem{lemmafoo}[theoremfoo]{Lemma}
\newenvironment{lemma}{\pagebreak[1]\begin{lemmafoo}}{\end{lemmafoo}}
\newtheorem{conjecturefoo}[theoremfoo]{Conjecture}
\newtheorem{conventionfoo}[theoremfoo]{Convention}
\newenvironment{convention}{\pagebreak[1]\begin{conventionfoo}\rm}{\end{conventionfoo}}
\newtheorem{porismfoo}[theoremfoo]{Porism}
\newtheorem{gamefoo}[theoremfoo]{Game}
\newtheorem{corollaryfoo}[theoremfoo]{Corollary}
\newenvironment{corollary}{\pagebreak[1]\begin{corollaryfoo}}{\end{corollaryfoo}}
\newtheorem{openfoo}[theoremfoo]{Open Problem}
\newtheorem{exercisefoo}{Exercise}
\newcommand{\fig}[1] 
{
 \begin{figure}
 \begin{center}
 \input{#1}
 \end{center}
 \end{figure}
}
\newtheorem{potanafoo}[theoremfoo]{Potential Analogue}
\newtheorem{notefoo}[theoremfoo]{Note}
\newenvironment{note}{\pagebreak[1]\begin{notefoo}\rm}{\end{notefoo}}
\newtheorem{notabenefoo}[theoremfoo]{Nota Bene}
\newtheorem{nttn}[theoremfoo]{Notation}
\newenvironment{notation}{\pagebreak[1]\begin{nttn}\rm}{\end{nttn}}
\newtheorem{empttn}[theoremfoo]{Empirical Note}
\newtheorem{examfoo}[theoremfoo]{Example}
\newenvironment{example}{\pagebreak[1]\begin{examfoo}\rm}{\end{examfoo}}
\newtheorem{dfntn}[theoremfoo]{Def}
\newenvironment{definition}{\pagebreak[1]\begin{dfntn}\rm}{\end{dfntn}}
\newtheorem{propositionfoo}[theoremfoo]{Proposition}
\newenvironment{proof}
    {\pagebreak[1]{\narrower\noindent {\bf Proof:\quad\nopagebreak}}}{\QED}
\newcommand{\yyskip}{\penalty-50\vskip 5pt plus 3pt minus 2pt}
\newcommand{\blackslug}{\hbox{\hskip 1pt
        \vrule width 4pt height 8pt depth 1.5pt\hskip 1pt}}
\newcommand{\QED}{{\penalty10000\parindent 0pt\penalty10000
        \hskip 8 pt\nolinebreak\blackslug\hfill\lower 8.5pt\null}
        \par\yyskip\pagebreak[1]}
\newcommand{\BBB}{{\penalty10000\parindent 0pt\penalty10000
        \hskip 8 pt\nolinebreak\hbox{\ }\hfill\lower 8.5pt\null}
        \par\yyskip\pagebreak[1]}
\newtheorem{factfoo}[theoremfoo]{Fact}
\newenvironment{block}{\begin{list}{\hbox{}}{\leftmargin 1em
    \itemindent -1em \topsep 0pt \itemsep 0pt \partopsep 0pt}}{\end{list}}
\begin{document}

\newcommand{\KN}{K_{\nat}}
\newcommand{\NRE}{\hbox{NUM-RED-EDGES\ }}
\newcommand{\NBE}{\hbox{NUM-BLUE-EDGES\ }}
\newcommand{\RED}{\hbox{RED\ }}
\newcommand{\BLUE}{\hbox{BLUE\ }}
\newcommand{\REDns}{\hbox{RED}}
\newcommand{\BLUEns}{\hbox{BLUE}}

\title{Three Proofs of the Hypergraph Ramsey Theorem (An Exposition)}

\author{
{William Gasarch}
\thanks{University of Maryland at College Park,
Department of Computer Science,
        College Park, MD\ \ 20742.
\texttt{gasarch@cs.umd.edu}
}
\\ {\small Univ. of MD at College Park}
\and
{Andy Parrish}
\thanks{University of California at San Diego,
Department of Mathematics,
9500 Gillman Dr,
La Jolla, CA\ 92093.
\texttt{atparrish@ucsd.edu}
}
\\ {\small Univ. of CA at San Diego}
\and
{Sandow Sinai}
\thanks{Poolesville High School,
Poolesville, MD, 20837
\texttt{sandow.sinai@gmail.com}
}
\\ {\small Poolesville High School}
}

\date{}

\maketitle

\begin{abstract}
Ramsey, \Erdosns-Rado, and Conlon-Fox-Sudakov have given
proofs of the 3-hypergraph Ramsey Theorem with better and
better upper bounds on the 3-hypergraph Ramsey numbers.
Ramsey and \Erdosns-Rado also prove the $a$-hypergraph Ramsey Theorem.
Conlon-Fox-Sudakov note that their upper bounds on the 3-hypergraph Ramsey Numbers,
together with a recurrence of \Erdosns-Rado (which was the key to the \Erdosns-Rado proof),
yield improved  bounds on the $a$-hypergraph Ramsey numbers.
We present all of these proofs and state explicit bounds for the 2-color case and the $c$-color case.
We give a more detailed analysis of the construction of Conlon-Fox-Sudakov
and hence obtain a slightly better bound.
\end{abstract}

\section{Introduction}\label{se:intro}

The 3-hypergraph Ramsey numbers $\rtkt$ 
were first shown to exist by Ramsey~\cite{Ramsey}.
His upper bounds on them were enormous.
\Erdosns-Rado~\cite{ErdosRado} obtained much better bounds,
namely $\rtkt \le \erveasy$.
Recently Conlon-Fox-Sudakov~\cite{conlonfoxsud} have
obtained $\rtkt \le \cfsveasy$.
We present all three proofs. 
For the Conlon-Fox-Sudakov proof 
we give a more detailed analysis that required a nontrivial lemma, and hence
we obtain slightly better bounds.
Before starting the second and third proofs we will discuss
why they improve the prior ones.

We also present extensions of all three proofs to the $a$-hypergraph case.
The first two are known proofs and bounds.
The \Erdosns-Rado proof gives a recurrence to obtain $a$-hypergraph
Ramsey Numbers from $(a-1)$-hypergraph Ramsey Numbers.
As Conlon-Fox-Sudakov note, this recurrence together with their improved
bound on $\rtkt$, yield better upper bounds on the $a$-hypergraph Ramsey Numbers.
Can the Conlon-Fox-Sudakov method itself be extended to a proof of the $a$-hypergraph Ramsey Theorem?
It can; however (alas), this does not seem to lead to better upper bounds.
We include this proof in the appendix in the hope that someone may 
improve either the construction or the analysis to obtain better
bounds on the $a$-hypergraph Ramsey Numbers.

For all of the proofs, the extension to $c$ colors is routine.
We present the results as notes; however, we leave the proofs as easy exercises for the reader.

\section{Notation and Ramsey's Theorem}

\begin{definition}
Let $X$ be a set and $a\in\nat$. Then $\binom{X}{a}$ is the set of
all subsets of $X$ of size $a$.
\end{definition}

\begin{definition}
Let $a,n\in\nat$.
The {\it complete $a$-hypergraph on $n$ vertices}, denoted $K_n^a$,
is the hypergraph  with vertex set 
$V=[n]$ and edge set $E=\binom{[n]}{a}$
\end{definition}

\begin{notation}
In this paper a {\it coloring of a graph or hypergraph} always means a coloring of the {\it edges}.
We will abbreviate $COL(\{x_1,\ldots,x_a\})$ by $COL(x_1,\ldots,x_a)$.
We will refer to a $c$-coloring of the edges of the complete hypergraph $K_n^a$ as
a $c$-coloring of $\binom{[n]}{a}$.
\end{notation}

\begin{definition}\label{de:homog}
Let $a\ge 1$.
Let $COL$ be a $c$-coloring of $\binom{[n]}{a}$. 
A set of vertices $H$ is {\it $a$-homogeneous for $COL$}
if every edge in $\binom{H}{a}$ is the same color.
We will drop the {\it for $COL$} when it is understood.
We will drop the $a$  when it is understood.
\end{definition}

\begin{convention}
When talking about 2-colorings will often denote the colors by $\RED$ and $\BLUEns$.
\end{convention}

\begin{note}
In Definition~\ref{de:homog} we allow $a=1$.
Note that a $c$-coloring of $\binom{[n]}{1}$ is just
a coloring of the numbers in $[n]$.
A homogenous subset $H$ is a subset of points that are all colored the same.
Note that in this case the edges are 1-subsets of the points and hence
are identified with the points.
\end{note}

\begin{definition}
Let $a,c,k\in\nat$.
Let $\rakc$ be the least $n$ such that, for all $c$-colorings
of $\binom{[n]}{a}$ there exists an $a$-homogeneous set $H\in \binom{[n]}{k}$.
We denote $\raktreal$ by $\rakt$.
We have not shown that $\rakc$ exists; however, we will.
\end{definition}

We state Ramsey's theorem 
for 1-hypergraphs (which is trivial) and 
for 2-hypergraphs (just graphs).
The 2-hypergraph case (and the $a$-hypergraph case) 
is due to Ramsey~\cite{Ramsey} (see also~\cite{ramseynotes,GRS,RamseyInts}).
The bound we give on $\rtwkt$ seems to be folklore (see~\cite{GRS}).

\begin{definition}
The expression $\omega(1)$ means a function that goes to infinity monotonically.
For example, $\floor{\lg\lg n}$.
\end{definition}

The following are well known.

\begin{theorem}\label{th:ramsey2}~
Let $k\in\nat$ and $c\ge 2$.
\begin{enumerate}
\item
$\rokt = 2k-1$.
\item
$\roktc = ck-c+1$.
\item
$\rtwkt\le \binom{2k-2}{k-1} \le 2^{2k -0.5\lg(k-1) -\Omega(1)}.$
\item
$\rtwkc \le \frac{(c(k-1))!}{(k-1)!)^c} \le c^{ck - 0.5\log_c(k-1)+O(c)}.$
\item
For all $n$, for every 2-coloring of $\binom{[n]}{2}$, there exists a 2-homogenous set 
$H$ of size at least $\frac{1}{2}\lg n + \omega(1) $.
(This follows from Part 3 easily. In fact, 
all you need is $\rtwkt\le 2^{2k-\Omega(1)}$.)
\end{enumerate}
\end{theorem}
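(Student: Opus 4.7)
The plan is to prove the five parts in order, each using standard Ramsey-theoretic ideas, with Parts 1 and 2 from pigeonhole, Parts 3 and 4 from the classical inductive argument, and Part 5 immediately from Part 3.

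Parts 1 and 2 reduce to pigeonhole. For Part 1, a 2-coloring of any set of $2k-1$ elements must place at least $k$ elements in some color class, giving $\rokt \le 2k-1$; tightness is witnessed by coloring $k-1$ elements red and $k-1$ blue. Part 2 is identical with $c$ colors: $c(k-1)+1 = ck-c+1$ elements force a color class of size $\ge k$, and the coloring with $k-1$ elements in each of $c$ colors shows equality.

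For Part 3, I would prove the more general inequality $R(s,t) \le \binom{s+t-2}{s-1}$, where $R(s,t)$ forces either a red $K_s$ or a blue $K_t$, by induction on $s+t$, using the classical recurrence $R(s,t) \le R(s-1,t) + R(s,t-1)$. The recurrence follows by fixing a vertex $v$ in an $R(s-1,t)+R(s,t-1)$-vertex complete graph: by pigeonhole either the red-neighborhood of $v$ has size $\ge R(s-1,t)$ (so we recurse to find a red $K_{s-1}$ extendable by $v$, or a blue $K_t$ outright), or the blue-neighborhood has size $\ge R(s,t-1)$ (symmetric). Setting $s=t=k$ yields $\rtwkt \le \binom{2k-2}{k-1}$, and Stirling's formula gives $\binom{2k-2}{k-1} \sim \frac{4^{k-1}}{\sqrt{\pi(k-1)}} = 2^{2k - 0.5\lg(k-1) - \Omega(1)}$.

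For Part 4, the same approach extends: I would prove the multinomial bound $R(k_1, \ldots, k_c) \le \binom{k_1 + \cdots + k_c - c}{k_1 - 1, \ldots, k_c - 1}$ by induction using the $c$-color recurrence $R(k_1,\ldots,k_c) \le \sum_{i=1}^{c} R(k_1, \ldots, k_i - 1, \ldots, k_c) - (c-2)$, again justified by pigeonholing the $c$ colors of edges incident to a single vertex. Setting all $k_i = k$ gives $\rtwkc \le \frac{(c(k-1))!}{((k-1)!)^{c}}$, and Stirling converts this to $c^{ck - 0.5\log_c(k-1) + O(c)}$. Part 5 is then immediate: inverting $\rtwkt \le 2^{2k - \Omega(1)}$, any 2-coloring of $\binom{[n]}{2}$ contains a monochromatic $K_k$ as long as $k \le \frac{1}{2}\lg n + \omega(1)$. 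The main obstacle is neither the existence arguments nor the recurrences, which are textbook, but the careful Stirling asymptotics needed to extract the sharp $-0.5\lg(k-1)$ and $-0.5\log_c(k-1)$ terms rather than settling for the crude $\binom{2k-2}{k-1} \le 4^{k}$ estimate.
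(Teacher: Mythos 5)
Your proposal is correct and uses the standard textbook arguments. Note, however, that the paper offers no proof of this theorem at all: it is labeled ``The following are well known,'' cited to folklore and to~\cite{GRS}, and followed only by a remark (Note~\ref{no:ramsey2}) pointing to Conlon's sharper bound and Spencer's lower bound. So there is no paper proof to compare against; your pigeonhole arguments for Parts~1--2, the $R(s,t)\le R(s-1,t)+R(s,t-1)$ recurrence with $\binom{s+t-2}{s-1}$ for Part~3, the multinomial analogue for Part~4, and the inversion for Part~5 are exactly what the authors are implicitly deferring to. One small caution on Part~5: echoing the paper's parenthetical, you write that ``all you need is $\rtwkt\le 2^{2k-\Omega(1)}$,'' but a constant deficit $2^{2k-C}$ inverts only to $\frac{1}{2}\lg n + C/2$, which is \emph{not} $\frac{1}{2}\lg n + \omega(1)$. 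What actually drives the $\omega(1)$ gain in Part~5 is the $-0.5\lg(k-1)$ term from Part~3: inverting $R(2,k)\le 2^{2k-0.5\lg(k-1)-C}$ with $k\approx\frac{1}{2}\lg n$ yields roughly $\frac{1}{2}\lg n + \frac{1}{4}\lg\lg n$, which is genuinely $\frac{1}{2}\lg n + \omega(1)$. If you intend to lean on the weaker form, you need a deficit that grows with $k$, i.e.\ $2^{2k-\omega(1)}$, not merely $2^{2k-\Omega(1)}$.
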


\begin{note}\label{no:ramsey2}
Theorem~\ref{th:ramsey2}.2 has an elementary proof. A more sophisticated proof,
by David Conlon~\cite{ramseyupper} yields
$\rtwkt \le  k^{-E\frac{\log k}{\log \log k}}\binom{2k}{k}$, where $E$ is some constant.
A simple probabilistic argument shows that $\rtwkt\ge (1+o(1))\frac{1}{e\sqrt{2}}k2^{k/2}$.
A more sophisticated argument by Spencer~\cite{ramseylower} (see~\cite{GRS}),
that uses the Lovasz Local Lemma, shows
$\rtwkt\ge (1+o(1))\frac{\sqrt{2}}{e}k2^{k/2}$.
\end{note}

We state Ramsey's theorem on $a$-hypergraphs~\cite{Ramsey} 
(see also~\cite{GRS,RamseyInts}).

\begin{theorem}\label{th:ramseyac}
Let $a,k,c\in\nat$.
For all $k\in \nat$, $\rakc$ exists.
\end{theorem}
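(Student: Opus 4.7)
My plan is to prove the existence of $\rakc$ by induction on $a$, following the classical \Erdosns-Rado construction. The base cases $a=1$ and $a=2$ are handled by Theorem~\ref{th:ramsey2} parts 2 and 4. For the inductive step I assume $R(a-1, m, c)$ exists for every $m \in \nat$ and show that, for $N$ sufficiently large, every $c$-coloring $COL$ of $\binom{[N]}{a}$ contains an $a$-homogeneous set of size $k$.

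The core construction builds a sequence $v_1, v_2, \ldots, v_m$ together with nested sets $[N] = X_0 \supseteq X_1 \supseteq \cdots \supseteq X_m$ such that $v_i \in X_{i-1}$ and, for every $(a-1)$-subset $T \subseteq \{v_1, \ldots, v_i\}$, the value $COL(T \cup \{x\})$ is constant over $x \in X_i$. Recursively, given $v_1, \ldots, v_{i-1}$ and $X_{i-1}$, pick any $v_i \in X_{i-1}$, then assign to each $x \in X_{i-1} \setminus \{v_i\}$ the tuple of values $COL(T \cup \{x\})$ with $T$ ranging over the $(a-1)$-subsets of $\{v_1, \ldots, v_i\}$ that contain $v_i$. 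There are at most $c^{\binom{i-1}{a-2}}$ such tuples, so pigeonhole yields a monochromatic class $X_i$ of size at least $(|X_{i-1}| - 1)/c^{\binom{i-1}{a-2}}$; the homogenization for $(a-1)$-subsets not containing $v_i$ is inherited from step $i-1$ since $X_i \subseteq X_{i-1}$. Take $m := R(a-1, k, c)$ (which exists by induction) and choose $N$ large enough that these accumulated pigeonhole losses leave $X_m$ nonempty.

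To conclude, define an induced $(a-1)$-coloring $f$ on $\{v_1, \ldots, v_m\}$ by $f(T) := COL(T \cup \{v_j\})$ for any $j$ larger than every index in $T$; this is well-defined by the homogenization property. Apply the inductive hypothesis to $f$ to find an $(a-1)$-homogeneous $H \subseteq \{v_1, \ldots, v_m\}$ with $|H| = k$. Listing any $a$-subset $\{w_1, \ldots, w_a\} \subseteq H$ in the order in which its vertices were chosen, one has $COL(w_1, \ldots, w_a) = f(\{w_1, \ldots, w_{a-1}\})$, which is constant on $\binom{H}{a-1}$; hence $H$ is $a$-homogeneous for $COL$.

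The argument is conceptually clean, so the main obstacle is purely quantitative: the factor-$c^{\binom{i-1}{a-2}}$ loss at each step compounds, producing the tower-type recurrence for $\rakc$ in terms of $R(a-1, \cdot, c)$ that underlies the \Erdosns-Rado bound. For the bare existence statement of Theorem~\ref{th:ramseyac} this is harmless; the quantitative improvements promised later in the paper will come from either sharpening this recurrence or replacing the base case at $a=3$, as in the Conlon-Fox-Sudakov approach.
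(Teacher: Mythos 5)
Your proof is correct and follows the Erd\H{o}s--Rado construction, which is precisely the paper's second proof of the hypergraph Ramsey theorem (Theorem~\ref{th:ErdosRadoa}): grow a vertex sequence by repeated pigeonholing so that each $(a-1)$-subset of chosen vertices has a well-defined ``forward'' color, then apply the $(a-1)$-uniform Ramsey theorem once at the end. The paper also gives Ramsey's original argument (Theorem~\ref{th:ramseya}), which instead invokes the $(a-1)$-uniform Ramsey theorem at \emph{every} step and finishes with a single pigeonhole, at the cost of a much worse tower-type bound; your route sits strictly on the Erd\H{o}s--Rado side. One small imprecision: as written, $f(T)=COL(T\cup\{v_j\})$ for $j>\max T$ is undefined when $v_m\in T$, so you should either define $f(T)=COL(T\cup\{x\})$ for $x\in X_{\max T}$ (which is what the nonemptiness of $X_m$ is for), or take $m=R(a-1,k,c)+1$ and search for $H$ inside $\{v_1,\ldots,v_{m-1}\}$; the paper's version instead uses $m=\ramkmt+1$, finds a $(k-1)$-element homogeneous set, and appends $x_m$, which is the tightest bookkeeping. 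None of this affects the existence statement.
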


\section{Summary of Results}

We will need both the tower function and Knuth's arrow notation
to state the results.

\begin{notation}
\begin{equation*}
c \uparrow^a k=
\begin{cases}
ck		    & \text{ if $a=0$, } \\
c^k,                & \text{ if $a=1$,} \\
1,                  & \text{ if $k=0$,} \\
c \uparrow^{a-1}(c \uparrow^a (k-1)), & \text{ otherwise}.\cr
\end{cases}
\end{equation*}
\end{notation}

\begin{definition}
We define $\TOW$ which takes on a variable number of
arguments.
\begin{enumerate}
\item
$\TOW_{c}(b)= c^b$.
\item
$\TOW_{c}(b_1,\ldots,b_L) = c^{b_1 \TOW_c(b_2,\ldots,b_L)}$.
\end{enumerate}
When $c$ is not stated it is assumed to be 2.
\end{definition}

\begin{example}~
\begin{enumerate}
\item
$\TOW(2k)=2^{2k}$.
\item
$\TOW(1,4k)= 2^{2^{4k}}$.
\item
$\TOW(1)=2$, $\TOW(1,1)=2^2$, $\TOW(1,1,1)=2^{2^2}$.
\end{enumerate}
\end{example}

The list below contains both who proved what bounds
and the results we will prove in this paper.

\begin{enumerate}
\item
Ramsey's proof~\cite{Ramsey} yields:
\begin{enumerate}
\item
$\rtkt \le 2\uparrow^2(2k-1)=\TOW(1,\ldots,1)$ where the number of 1's is $2k-1$.
\item
$\rakt\le 2\uparrow^{a-1}(2k-1)$.
\end{enumerate}
\item
The \Erdosns-Rado~\cite{ErdosRado} proof yields:
\begin{enumerate}
\item
$\rtkt\le \erv$.
\item
$\rakt \le  2^{\binom{\ramkmt+1}{a-1}}+a-2.$
\item
Using the recurrence they obtain the following:
For all $a\ge 4$, 
$\rakt \le \TOW(1,a-1,a-2,\ldots,3,4k-\lg (k-a+1)-4(a-3)))$.
\end{enumerate}
\item
The Conlon-Fox-Sudakov~\cite{conlonfoxsud} proof yields:
\begin{enumerate}
\item
$\rtkt \le \cfsv$ where $B=\bigl(\frac{e}{\sqrt{2\pi}}\bigr)^3\sim 1.28$.
\item
If you combine this with the recurrence obtained by \Erdosns-Rado then one obtains:
\begin{enumerate}
\item
$\rtkt \le \TOW(B(k-1)^{1/2},2^{2k})$.
\item
$\rfokt \le \TOW(1,3B(k-2)^{1/2},2^{2k-2})$.
\item
$\rfikt \le \TOW(1,4,3B(k-3)^{1/2},2^{2k-4})$.
\item
For all $a\ge 6$, for almost all $k$, 
$$\rakt \le \TOW(1,a-1,a-2,\ldots,4,3B(k-a+2)^{1/2},2^{2k-2a+6}).$$
\end{enumerate}
\end{enumerate}
\item
The Appendix contains an alternative proof of the $a$-hypergraph Ramsey Theorem
based on the ideas of Conlon-Fox-Sudakov. Since it does not yield better bounds
we do not state the bounds here.
\end{enumerate}

\begin{notation}
PHP stands for Pigeon Hole Principle.
\end{notation}

We will need the following lemma whose easy proof we 
leave to the reader.

\begin{lemma}\label{le:tow}
For all $b,b_1,\ldots,b_L\in\nat$ the following hold.
\begin{enumerate}
\item
$\TOW(b_1,\ldots,b_i,b_{i+1},b_{i+2}\ldots,b_L) \le \TOW(b_1,\ldots,1,b_{i+1}+\lg(b_i),b_{i+2},\ldots,b_L).$
\item
$\TOW(b_1,\ldots,b_L)^b = \TOW(bb_1,b_2,\ldots,b_L)$.
\item
$(1+\delta)\TOW(b_1,\ldots,b_L) \le
\TOW(b_1,b_2,\ldots,b_L+\delta)$.
\item
$(1+\delta)\TOW(b_1,\ldots,b_L)^b \le \TOW(bb_1,b_2,\ldots,b_L+\delta)$. (This follows from 1 and 2.)
\item
$2^{\TOW(b_1,\ldots,b_L)}= \TOW(1,b_1,\ldots,b_L)$.
\item
$2^{(1+\delta)\TOW(b_1,\ldots,b_L)^b}\le \TOW(1,bb_1,b_2,\ldots,b_L+\delta)$.
(This follows from 4 and 5.)
\item
$\lg^{(c)}(\TOW(1,\ldots,1))=1$ (there are $c$ 1's).
\end{enumerate}
\end{lemma}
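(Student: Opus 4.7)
The plan is to verify each part by directly unfolding the recursive definition $\TOW(b_1,\ldots,b_L) = 2^{b_1\,\TOW(b_2,\ldots,b_L)}$, with a short induction on $L$ where needed. Three of the seven parts are essentially one-liners. Part 5 is just the definition: $\TOW(1,b_1,\ldots,b_L) = 2^{1 \cdot \TOW(b_1,\ldots,b_L)} = 2^{\TOW(b_1,\ldots,b_L)}$. Part 2 follows from $(2^{b_1 Y})^b = 2^{bb_1 Y}$ applied to $Y = \TOW(b_2,\ldots,b_L)$. Part 7 is an easy induction on $c$: one application of $\lg$ strips off a leading $1$ by part 5, and the base case is $\lg \TOW(1) = \lg 2 = 1$.

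For part 3 I would use the auxiliary inequality $2^\delta \ge 1+\delta$ for $\delta\in\nat$ (itself an immediate induction on $\delta$) and then induct on $L$. The inductive hypothesis gives $\TOW(b_2,\ldots,b_L+\delta) \ge (1+\delta)\TOW(b_2,\ldots,b_L)$, so
\[
\TOW(b_1,\ldots,b_L+\delta) = 2^{b_1\,\TOW(b_2,\ldots,b_L+\delta)} \ge \TOW(b_1,\ldots,b_L)\cdot 2^{b_1\delta\,\TOW(b_2,\ldots,b_L)},
\]
and the extra factor is at least $2^\delta \ge 1+\delta$, yielding the claim.

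For part 1, after cancelling the outer $i-1$ identical layers the inequality reduces to $\TOW(b_i,b_{i+1},b_{i+2},\ldots,b_L) \le \TOW(1,b_{i+1}+\lg b_i,b_{i+2},\ldots,b_L)$. Writing $X = \TOW(b_{i+2},\ldots,b_L)$, both sides are two-level exponentials, and the inequality becomes $b_i \cdot 2^{b_{i+1}X} \le 2^{(b_{i+1}+\lg b_i)X}$; taking $\lg$ once more reduces this to $\lg b_i \le (\lg b_i)X$, which holds since $X \ge 1$ (in fact $\TOW$ always outputs at least $2$). The main bookkeeping obstacle is just keeping track of indices when the modified coordinate sits in the middle of the tuple rather than at the end; once both towers are written out explicitly the comparison is mechanical. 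Parts 4 and 6 are then immediate compositions, exactly as the statement indicates: part 4 by chaining parts 1 and 2, and part 6 by applying part 5 to part 4.
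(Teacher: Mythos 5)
The paper does not actually give a proof of this lemma---it is explicitly ``left to the reader''---so there is no paper argument to compare against. Your verification is the natural one: unfold the recursive definition $\TOW(b_1,\ldots,b_L)=2^{b_1\TOW(b_2,\ldots,b_L)}$ directly, and it is correct.

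Two small remarks. First, your reduction for Part~1 (cancel the common outer layers, write $X=\TOW(b_{i+2},\ldots,b_L)$, take $\lg$ twice to arrive at $\lg b_i\le(\lg b_i)X$) is the right move; note you only need $X\ge 1$, not $X\ge 2$, and when $i=L-1$ the quantity $X$ degenerates to the empty tower, which one should interpret as $1$ so that the inequality holds with equality. The claim also silently assumes all $b_j\ge 1$ (as does Part~3, which fails for $b_1=0$), which is consistent with how the lemma is used in the paper. Second, you echo the lemma's own parenthetical in saying Part~4 ``follows from 1 and 2''; the chain you actually need is Parts~2 and~3---apply Part~2 to convert the $b$-th power to a leading factor $bb_1$, then Part~3 to absorb the $(1+\delta)$ into the innermost coordinate. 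That attribution slip is in the paper's statement, not introduced by you, and your Part~6 derivation (Part~5 applied to Part~4) is as stated.
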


\section{Ramsey's Proof}\label{se:ramsey}

\begin{theorem}\label{th:ramsey3}
For almost $k$ $\rtkt\le 2\uparrow^2(2k-1)=\TOW(1,\ldots,1)$ where there are $2k-1$ 1's.
\end{theorem}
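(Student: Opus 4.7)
The plan is the classical iterative vertex-extraction argument (essentially Ramsey's original method). Fix $k$ large, set $N = 2\uparrow^2(2k-1) = \TOW(1,\ldots,1)$ with $2k-1$ ones, and let $COL$ be an arbitrary 2-coloring of $\binom{[N]}{3}$. The goal is to build, by induction on $i$, a descending chain $S_0 = [N] \supseteq S_1 \supseteq \cdots \supseteq S_{2k-2}$, a sequence of pivots $v_i \in S_{i-1}$, and colors $c_i \in \{\REDns, \BLUEns\}$ such that $COL(v_i, x, y) = c_i$ for every $i$ and every $\{x, y\} \subseteq S_i$.

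To execute step $i$, pick any $v_i \in S_{i-1}$ and define an auxiliary 2-coloring of $\binom{S_{i-1} \setminus \{v_i\}}{2}$ by $COL'(x,y) = COL(v_i, x, y)$. Theorem~\ref{th:ramsey2}(3) gives $\rtwkt \le 2^{2k - \Omega(1)}$, so $COL'$ admits a 2-homogeneous set of size at least roughly $\tfrac{1}{2}\log_2 |S_{i-1}|$; I take this to be $S_i$ and let $c_i$ be its monochromatic color. A routine induction on $i$, using Lemma~\ref{le:tow} to manipulate the tower expression, shows that starting from $|S_0|$ a tower of height $2k-1$ we have $|S_i| \ge \TOW(1,\ldots,1)$ with $2k-1-i$ ones for every $i \le 2k-2$, provided $k$ is large enough to absorb the multiplicative $\tfrac{1}{2}$ and additive $O(1)$ losses per step. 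In particular $|S_{2k-2}| \ge 2$, so a final pivot $v_{2k-1}$ can be chosen from $S_{2k-2}$.

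A pigeonhole on the colors $c_1, \ldots, c_{2k-1}$ yields $k$ indices $i_1 < \cdots < i_k$ with $c_{i_1} = \cdots = c_{i_k}$. I would then verify that $H = \{v_{i_1}, \ldots, v_{i_k}\}$ is 3-homogeneous: for any $a < b < c$ in $\{1,\ldots,k\}$, both $v_{i_b}$ and $v_{i_c}$ lie in $S_{i_a}$ (each subsequent pivot comes from a set contained in $S_{i_a}$), so $COL(v_{i_a}, v_{i_b}, v_{i_c}) = c_{i_a}$, which is the same value for every triple drawn from $H$.

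The only real difficulty is the tower bookkeeping in the inductive step: each application of the graph Ramsey bound chops off exactly one level of the tower but also introduces a multiplicative $\tfrac{1}{2}$ and an additive $O(1)$. One has to check that $2k-2$ such contractions are still accommodated inside a tower of height $2k-1$, and this is exactly where the "for almost all $k$" clause is used to absorb the constants.
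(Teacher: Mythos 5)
Your proposal is essentially the paper's argument: iteratively extract a pivot, apply the graph Ramsey bound (Theorem~\ref{th:ramsey2}) to the induced 2-coloring to shrink the ambient set, record the monochromatic color, and finish by pigeonholing on the recorded colors. The differences are cosmetic bookkeeping only (you run $2k-2$ shrinking steps and append a final unpaired pivot from $S_{2k-2}$ rather than running all $2k-1$ steps, and your stated invariant $|S_i|\ge\TOW(1,\ldots,1)_{2k-1-i}$ should really carry the $\tfrac12$ factor as the paper's does), both handled at the same level of informality as the paper and absorbed by the same ``for almost all $k$'' clause.
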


\begin{proof}

Let $n$ be a number to be determined.
Let $COL$ be a 2-coloring of $\binom{[n]}{3}$.
We define a sequence of vertices, 
$$x_1,x_2,\ldots,x_{2k-1}.$$ 

Here is the basic idea: Let $x_1=1$.
This induces the following coloring of $\binom{[n]-\{1\}}{2}$:
$$COL^*(x,y) = COL(x_1,x,y).$$
By Theorem \ref{th:ramsey2}
there exists a 2-homogeneous set for $COL^*$ of size $\frac{1}{2}\lg n +\omega(1)$.
Keep that 2-homogeneous set and ignore the remaining points.
Let $x_2$ be the least vertex that has been kept (bigger than $x_1$).
Repeat the process.

We describe the construction formally.

\bigskip

\noindent
{\bf CONSTRUCTION}

$V_0=[n]$

Assume $1\le i \le 2k-1$ and that $V_{i-1}$, $x_1,x_2,\ldots,x_{i-1}$, 
$c_1,\ldots,c_{i-1}$ are all defined. 
We define $x_i$, $COL^*$, $V_i$, and $c_i$:

\[
\begin{array}{rl}
x_i =& \hbox{ the least number in $V_{i-1}$} \cr
V_{i} =& V_{i-1} - \{x_i\} \hbox{ (We will change this set without changing its name.)} \cr
COL^*(x,y)=&COL(x_{i},x,y) \hbox{ for all }\{x,y\} \in \binom{V_{i}}{2}\cr
V_{i} = & \hbox{ the largest 2-homogeneous set for $COL^*$}\cr
c_{i} = & \hbox{ the color of $V_{i}$}\cr
\end{array}
\]

\noindent
KEY: for all $y,z\in V_{i}$, $COL(x_i,y,z)=c_i$.

\noindent
{\bf END OF CONSTRUCTION}

\bigskip

When we derive upper bounds on $n$ we will show that the construction can be carried out for $2k-1$ stages.  For now assume the construction ends.

We have vertices

$$x_1,x_2,\ldots,x_{2k-1}$$

and associated colors

$$c_1, c_2,\ldots,c_{2k-1}.$$

There are only two colors, hence, by PHP,
there exists 
$i_1,\ldots,i_{k}$ such that $i_1<\cdots < i_{k}$ and
$$c_{i_1} = c_{i_2} = \cdots = c_{i_{k}}$$
We take this color to be $\REDns$.
We show that 

$$H=\{x_{i_1}, x_{i_2},  \ldots,  x_{i_{k}}\}.$$

\noindent
is $3$-homogenous for $COL$.
For notational convenience we show 
that $COL(x_{i_1},x_{i_2},x_{i_3})=\REDns$.
The proof for any $3$-set of $H$ is similar.
By the definition of $c_{i_1}$
$(\forall A \in \binom{V_{i_1}-\{x_{i_1}\}}{2})[COL(A\cup \{ x_{i_1}) \})=c_i]$
In particular
$$COL(x_{i_1},x_{i_2},x_{i_3})=c_{i_1}=\REDns.$$

We now see how large $n$ must be so that the construction can be carried out.
By Theorem \ref{th:ramsey2}, if $k$ is large,
at every iteration $V_i$ gets reduced by a logarithm, cut in half, and then an $\omega(1)$
is added.  Using this it is easy to show that, for almost all $k$,

$$|V_j| \ge \frac{1}{2}(\lg^{(j)} n)+\omega(1).$$

We want to run this iteration $2k-1$ times 
Hence we need

$$|V_{2k-1}| \ge \frac{1}{2}\log_2^{(2k-1)} n +\omega(1) \ge 1.$$

We can take $n=\TOW(1,\ldots,1)$ where 1 appears $2k-1$ times, and
use Lemma~\ref{le:tow}.
\end{proof}

\begin{note}
The proof of Theorem~\ref{th:ramsey3} generalizes to $c$-colors to yield
$$\rtkc\le c\uparrow^{2}(ck-c+1)=\TOW_c(1,\ldots,1)$$
where the number of 1's is $ck-c+1$.
\end{note}

We now prove Ramsey's Theorem for $a$-hypergraphs.

\begin{theorem}\label{th:ramseya}~
For all $a\ge 1$, for all $k\ge 1$, 
$\rakt\le 2\uparrow^{a-1}(2k-1)$.
\end{theorem}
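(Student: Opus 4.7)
The plan is induction on $a$. The base case $a=1$ follows from Theorem~\ref{th:ramsey2}.1 ($\rokt = 2k-1 \le 2(2k-1) = 2\uparrow^0(2k-1)$), and the case $a=2$ follows from Theorem~\ref{th:ramsey2}.3 ($\rtwkt \le 2^{2k-1} = 2\uparrow^1(2k-1)$, easily verified for all $k\ge 1$). For the inductive step, assuming the bound holds at $a-1$, I would reuse the construction from the proof of Theorem~\ref{th:ramsey3} essentially verbatim, replacing its appeal to Theorem~\ref{th:ramsey2} with the inductive hypothesis $\ramkt \le 2\uparrow^{a-2}(2m-1)$.

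Concretely, set $n = 2\uparrow^{a-1}(2k-1)$ and $V_0 = [n]$. At stage $i$, let $x_i$ be the least element of $V_{i-1}$, define a $2$-coloring $COL^*$ of $\binom{V_{i-1}-\{x_i\}}{a-1}$ by $COL^*(y_1,\ldots,y_{a-1}) = COL(x_i,y_1,\ldots,y_{a-1})$, and invoke the inductive hypothesis to extract a maximal $(a-1)$-homogeneous subset $V_i \subseteq V_{i-1}-\{x_i\}$ of color $c_i$. The KEY invariant generalizes cleanly: for every $(a-1)$-subset $\{y_1,\ldots,y_{a-1}\}$ of $V_i$, we have $COL(x_i,y_1,\ldots,y_{a-1}) = c_i$. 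Run the construction for $2k-1$ stages and apply PHP to extract $i_1 < \cdots < i_k$ with $c_{i_1} = \cdots = c_{i_k}$. The set $H = \{x_{i_1},\ldots,x_{i_k}\}$ is then $a$-homogeneous: for any $a$-subset $\{x_{i_{j_1}},\ldots,x_{i_{j_a}}\} \subseteq H$ with $j_1<\cdots<j_a$, the nesting $V_{i_{j_1}} \supseteq V_{i_{j_2}} \supseteq \cdots$ puts $x_{i_{j_2}},\ldots,x_{i_{j_a}}$ inside $V_{i_{j_1}}$, so the KEY invariant gives $COL(x_{i_{j_1}},\ldots,x_{i_{j_a}}) = c_{i_{j_1}}$.

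The size analysis is the only nontrivial piece. At each stage, the inductive hypothesis lets us produce $V_i$ provided $|V_{i-1}| - 1 \ge 2\uparrow^{a-2}(2|V_i|-1)$, so one iteration peels off the outermost $2\uparrow^{a-2}$ layer and roughly halves the remaining argument. Starting from $|V_0| = 2\uparrow^{a-1}(2k-1) = 2\uparrow^{a-2}(2\uparrow^{a-1}(2k-2))$ and unfolding via the Knuth-arrow recursion $2\uparrow^{a-1}(t) = 2\uparrow^{a-2}(2\uparrow^{a-1}(t-1))$ at each of the $2k-1$ stages, the sizes $|V_j|$ stay comfortably above $\tfrac{1}{2}\cdot 2\uparrow^{a-1}(2k-1-j)$, leaving $|V_{2k-1}| \ge 1$ at the end, which is all the construction requires.

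The main obstacle is ensuring that the halving and off-by-one losses incurred at each of the $2k-1$ stages do not swamp the bound. Since $2\uparrow^{a-2}$ grows so rapidly, constant-factor changes in its output correspond to negligible changes in its input, so the losses are readily absorbed; a Knuth-arrow analogue of Lemma~\ref{le:tow} captures this formally, just as Lemma~\ref{le:tow} itself absorbed the $\omega(1)$ slack in the proof of Theorem~\ref{th:ramsey3}.
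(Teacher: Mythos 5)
Your overall strategy matches the paper's exactly: induction on $a$, lift the construction from Theorem~\ref{th:ramsey3} verbatim (replace the appeal to Theorem~\ref{th:ramsey2} by the inductive hypothesis $\ramkt \le 2\uparrow^{a-2}(2k-1)$), prove the KEY invariant, run $2k-1$ stages, and apply PHP. That part is fine.

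Where you and the paper diverge is the size analysis, and your version has a real problem. The paper formulates an explicit claim, proved by induction on $i$:
\[
|V_i|\ \ge\ 2\uparrow^{a-1}\bigl(2k-(i+1)\bigr),
\]
with \emph{no} fudge factor, and the inductive step exploits the exact identity $2\uparrow^{a-1}(t)=2\uparrow^{a-2}\bigl(2\uparrow^{a-1}(t-1)\bigr)$ so that $|V_{i-1}|\ge 2\uparrow^{a-1}(2k-i)=2\uparrow^{a-2}\bigl(2\uparrow^{a-1}(2k-(i+1))\bigr)\ge \ramstuff$, hence $|V_i|\ge 2\uparrow^{a-1}(2k-(i+1))$. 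Your proposed invariant $|V_j|\ge \tfrac{1}{2}\cdot 2\uparrow^{a-1}(2k-1-j)$ does not propagate: the inductive hypothesis costs you a factor of $2$ \emph{inside} the argument of $2\uparrow^{a-2}$ (you need $|V_{j-1}|-1 \ge 2\uparrow^{a-2}(2|V_j|-1)$), and a constant factor on the input of $2\uparrow^{a-2}$ is not ``absorbed'' into a constant factor on the output --- it is magnified enormously (already $2\uparrow^1(2m)=(2\uparrow^1(m))^2$). So the heuristic ``rapid growth absorbs constant-factor slack'' is running in precisely the wrong direction here, and a Knuth-arrow analogue of Lemma~\ref{le:tow} will not rescue an invariant with a $\tfrac12$ sitting outside $2\uparrow^{a-1}$. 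Replace your approximate invariant with the paper's exact Claim~1 and carry out its induction on $i$; that is the right shape for the bookkeeping.
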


\begin{proof}

We prove this by induction on $a$. Note that when we
have the theorem for $a$ we have it for $a$ and for {\it all} $k\ge 1$.

\noindent
{\bf Base Case:}
If $a=1$ then, for all $k\ge 1$,  $\rokt=2k-1 \le 2\uparrow^0(2k-1) = 4k-2$.

\noindent
{\bf Induction Step:}
We assume that, for all $k$, $\ramkt \le 2\uparrow^{a-2}(2k-1)$.

Let $k\ge 1$.
Let $n$ be a number to be determined later.
Let $COL$ be a 2-coloring of $\binom{[n]}{a}$.
We show that there is
an $a$-homogenous set for $COL$ of size $k$.

\noindent
{\bf CONSTRUCTION}

$V_0=]n]$.

Assume $1\le i \le 2k-1$ and that $V_{i-1}$, $x_1,x_2,\ldots,x_{i-1}$, 
$c_1,\ldots,c_{i-1}$ are all defined. 
We define $x_i$, $COL^*$, $V_i$, and $c_i$:

\[
\begin{array}{rl}
x_i =& \hbox{ the least number in $V_{i-1}$} \cr
V_{i} =& V_{i-1} - \{x_i\} \hbox{ (We will change this set without changing its name.)} \cr
COL^*(A)=&COL(x_{i}\cup A) \hbox{ for all } A \in \binom{V_i}{a-1})\cr
V_{i} = & \hbox{ the largest $a-1$-homogeneous set for $COL^*$}\cr
c_{i} = & \hbox{ the color of $V_{i}$}\cr
\end{array}
\]

\noindent
KEY: For all $1\le i\le 2k-1$, 
$(\forall A \in \binom{V_{i}}{a-1})[COL(A\cup x_{i})=c_i]$

\noindent
{\bf END OF CONSTRUCTION}

When we derive upper bounds on $n$ we will show that the construction can be carried out for $2k-1$ stages.  For now assume the construction ends.

We have vertices

$$x_1,x_2,\ldots,x_{2k-1}$$

and associated colors

$$c_1, c_2,\ldots,c_{2k-1}.$$

There are only two colors, hence, by PHP,
there exists 
$i_1,\ldots,i_{k}$ such that $i_1<\cdots < i_{k}$ and
$$c_{i_1} = c_{i_2} = \cdots = c_{i_{k}}$$
We take this color to be $\REDns$.
We show that 

$$H=\{x_{i_1}, x_{i_2},  \ldots,  x_{i_{k}}\}.$$

\noindent
is $a$-homogenous for $COL$.
For notational convenience we show 
that $COL(x_{i_1},\ldots,x_{i_a})=\REDns$.
The proof for any $a$-set of $H$ is similar.
By the definition of $c_{i_1}$
$(\forall A \in \binom{V_{i_1}}{a-1})[COL(A\cup x_{i_1})=c_i]$
In particular
$$COL(x_{i_1},\ldots,x_{i_a})=c_{i_1}=\REDns.$$

We show that if $n= 2\uparrow^{a-1} (2k-1)$ then the
construction can be carried out for $2k-1$ stages.

\noindent
{\bf Claim 1:} For all $0\le i\le 2k-1$, $|V_i|\ge 2\uparrow^{a-1}(2k-(i+1))$.

\noindent
{\bf Proof of Claim 1:}
We prove this claim by induction on $i$.
For the base case note that
$$|V_0|=n=2\uparrow^{a-1} (2k-1).$$

Assume $|V_{i-1}|\ge 2\uparrow^{a-1}(2k-i)$.
By the definition of the uparrow function and by the inductive
hypothesis of the theorem,
$$|V_{i-1}|\ge
2\uparrow^{a-1}(2k-i)=
2\uparrow^{a-2}(2\uparrow^{a-1}(2k-(i+1)))
\ge
\ramstuff.
$$

By the construction $V_i$ is the result of applying 
the $(a-1)$-ary Ramsey Theorem to a 2-coloring of $\binom{V_{i-1}}{a}$.
Hence $|V_i| \ge 2\uparrow^{a-1}(2k-(i+1)).$

\noindent
{\bf End of Proof of Claim 1}

By Claim~1 if $n=2\uparrow^{a-1}(2k-1)$ then  
the construction can be carried out for $2k-1$ stages.
Hence $\rakt \le 2\uparrow^{a-1}(2k-1)$.
\end{proof}


The proof of Theorem~\ref{th:ramsey3} is actually an $\omega^2$-induction
that is similar in structure to the original proof of van der Warden's theorem~\cite{VDWbook,GRS,VDW}.

\begin{note}
The proof of Theorem~\ref{th:ramseya} generalizes to $c$ colors yielding
$$\rakc\le c\uparrow^{a-1}(ck-c+1).$$
\end{note}

\section{The \Erdosns-Rado Proof}\label{se:ErdosRado}

Why does Ramsey's proof yield such large upper bounds?
Recall that in Ramsey's proof we do the following:
\begin{itemize}
\item
Color a {\it node} by using Ramsey's theorem (on graphs). This cuts the number of nodes
down by a log (from $m$ to $\Theta(\log m)$).
This is done $2k-1$ times.
\item
After the nodes are colored we use PHP once.
This will cut the number of nodes in half.
\end{itemize}

The key to the large bounds is the number of times we use Ramsey's theorem.
The key insight of the proof by \Erdos and Rado~\cite{ErdosRado} is that
they use PHP many times but Ramsey's theorem only once. In summary they do the following:

\begin{itemize}
\item
Color an {\it edge } by using PHP. This cuts the number of nodes in half.
This is done $\rtwkmt+1$ times.
\item
After {\it all} the edges of a complete graph are colored we use Ramsey's theorem.
This will cut the number of nodes down by a log.
\end{itemize}

We now proceed formally.

\begin{theorem}\label{th:ErdosRado3}~
For almost all $k$, $\rtkt \le \erv$.
\end{theorem}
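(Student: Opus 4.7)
The plan is to implement the \Erdosns-Rado strategy sketched immediately before the statement: build a sequence $x_1, x_2, \ldots, x_N$ of vertices so that the $COL$-color of any triple $\{x_i, x_j, x_d\}$ with $i<j<d$ depends only on the pair $(i,j)$, and then apply the pair-Ramsey bound of Theorem~\ref{th:ramsey2}.3 to the induced pair-coloring to extract a 3-homogeneous set. The contrast with Section~\ref{se:ramsey} is that we pay for Ramsey's graph theorem only once (at the end), while the many intermediate rounds use only PHP, losing merely a factor of $2$ per round instead of a logarithm.

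Concretely, set $V_0 = [n]$. At stage $i \ge 1$, let $x_i$ be the least element of $V_{i-1}$ and put $W = V_{i-1} \setminus \{x_i\}$. Going through $j = 1, \ldots, i-1$ in turn, the 2-coloring $y \mapsto COL(x_j, x_i, y)$ on the currently retained subset of $W$ admits, by PHP, a monochromatic subset of at least half its size; shrink accordingly and record the surviving color as $c(x_j, x_i)$. Let $V_i$ be what remains after all $i-1$ rounds; then $|V_i| \ge (|V_{i-1}|-1)/2^{i-1}$, and the KEY property holds: for every $a < b$ and every $y \in V_b$, $COL(x_a, x_b, y) = c(x_a, x_b)$. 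Consequently, any $\{x_{i_1}, \ldots, x_{i_m}\}$ (with $i_1 < \cdots < i_m$) that is 2-homogeneous for $c$ is automatically 3-homogeneous for $COL$: any triple $a<b<d$ has color $c(x_{i_a}, x_{i_b})$, because $x_{i_d} \in V_{i_b}$.

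It remains to choose $N$ and estimate $n$. Taking $N = \rtwkmt + 1$ suffices: the first $\rtwkmt$ vertices contain a $c$-monochromatic $(k-1)$-set $\{x_{i_1}, \ldots, x_{i_{k-1}}\}$ by Theorem~\ref{th:ramsey2}.3, and appending $x_N$ yields a 3-homogeneous $k$-set for $COL$. Iterating $|V_i| \ge (|V_{i-1}|-1)/2^{i-1}$ gives $|V_{N-1}| \ge n/2^{(N-1)(N-2)/2}$ up to the negligible losses from the subtracted $1$'s, so it suffices that $n$ exceed roughly $2^{N^2/2}$. Using $\rtwkmt \le 2^{2(k-1) - 0.5\lg(k-2) - \Omega(1)}$, we get $N^2/2 \le 2^{4k - \lg(k-2) - \Omega(1)}$, meeting the bound $\rtkt \le \erv$.

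The principal difficulty is the bookkeeping: the stagewise subtractions of $1$, the shift from $\rtwkt$ to $\rtwkmt + 1$, and the slack in Theorem~\ref{th:ramsey2}.3 must all be absorbed into the $\Omega(1)$ term in the exponent; confirming that the use of $k-1$ (rather than $k$) in the inner pair-Ramsey invocation is precisely what converts the naive $\lg(k-1)$ into the stated $\lg(k-2)$ requires a careful but routine arithmetic pass, and is the reason the theorem is stated only ``for almost all $k$.''
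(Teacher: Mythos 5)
Your proposal is correct and follows the paper's own \Erdosns-Rado argument step for step: the same PHP-halving at each pair $(x_j,x_i)$, the same KEY invariant ($COL(x_a,x_b,y)$ depends only on the pair once $y \in V_b$), the same choice $N=\rtwkmt+1$ with the graph Ramsey theorem invoked exactly once at the end, and the same arithmetic $|V_i| \gtrsim n/2^{\Theta(i^2)}$ leading to $n \approx 2^{\rtwkmt^2}$. The only cosmetic difference is that you keep the tighter exponent $(N-1)(N-2)/2$ where the paper rounds up to $(N-1)^2$, but both are absorbed into the same final form $\erv$ for almost all $k$.
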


\begin{proof}

Let $n$ be a number to be determined.
Let $COL$ be a 2-coloring of $\binom{[n]}{3}$.
We define a sequence of vertices, 
$$x_1,x_2,\ldots,x_{\rtwkmt+1}.$$ 

Recall the definition of a 1-homogeneous set for a coloring
of singletons from the note following Definition \ref{de:homog}.
We will use it here.

Here is the intuition: Let $x_1=1$.
Let $x_2=2$.
The vertices $x_1,x_2$
induces the following coloring of $\{3,\ldots,n\}$.
$$COL^*(y) = COL(x_1,x_2,y).$$
Let $V_1$ be a 1-homogeneous for $COL^*$ of size at least $\frac{n-2}{2}$.
Let $COL^{**}(x_1,x_2)$ be the color of $V_1$.
Let $x_3$ be the least vertex left (bigger than $x_2$).

The number $x_3$ induces {\it two} colorings of $V_1 - \{x_3\}$:
$$(\forall y\in V_1-\{x_3\})[COL_1^*(y) = COL(x_1,x_3,y)]$$
$$(\forall y\in V_1-\{x_3\})[COL_2^*(y) = COL(x_2,x_3,y)]$$

Let $V_2$ be a 1-homogeneous for $COL_1^*$ of size $\frac{|V_1|-1}{2}$.
Let $COL^{**}(x_1,x_3)$ be the color of $V_2$.
Restrict $COL_2^*$ to elements of $V_2$, though still call it $COL_2^*$.
We reuse the variable name $V_2$
to be a 1-homogeneous for $COL_2^*$ of size
at least  $\frac{|V_2|}{2}$.
Let $COL^{**}(x_1,x_3)$ be the color of $V_2$.
Let $x_4$ be the least element of $V_2$.
Repeat the process.

We describe the construction formally.

\bigskip

\noindent
{\bf CONSTRUCTION}

\[
\begin{array}{rl}
x_1 =& 1 \cr
V_1= & [n]-\{x_1\}\cr
\end{array}
\]

Let $2\le i\le \rtwkmt+1$. Assume that $x_1,\ldots,x_{i-1},V_{i-1}$, and
$COL^{**}:\binom{\{x_1,\ldots,x_{i-1}\}}{2}\into \{\REDns,\BLUEns\}$ are defined.

\[
\begin{array}{rl}
x_{i} = & \hbox{ the least element of $V_{i-1}$ } \cr
V_{i} =&  V_{i-1} - \{x_{i}\} \hbox{ (We will change this set without changing its name). }\cr
\end{array}
\]

We define $COL^{**}(x_1,x_{i})$, $COL^{**}(x_2,x_{i})$, $\ldots$, $COL^{**}(x_{i-1},x_{i})$.
We will also define smaller and smaller sets $V_i$.
We will keep the variable name $V_i$ throughout.

\bigskip

\noindent
For $j=1$ to $i-1$
\begin{enumerate}
\item
$COL^*: V_{i}\into \{\REDns,\BLUEns\}$ is defined by $COL^*(y) = COL(x_j,x_{i},y)$.
\item
Let $V_{i}$ be redefined as the largest 1-homogeneous set for $COL^*$.
Note that $|V_{i}|$ decreases by at most half.
\item
$COL^{**}(x_j,x_{i})$ is the color of $V_{i}$.
\end{enumerate}

\noindent
KEY: For all $1\le i_1<i_2 \le i$, for all $y\in V_{i}$, 
$COL(x_{i_1},x_{i_2},y)=COL^{**}(x_{i_1},x_{i_2})$.

\noindent
{\bf END OF CONSTRUCTION}

\bigskip

When we derive upper bounds on $n$ 
we will show that the the construction can be carried out for $\rtwkmt+1$ stages. For now assume the construction ends.  

We have vertices

$$X=\{x_1,x_2,\ldots,x_{\rtwkmt+1}\}$$

\noindent
and a 2-coloring $COL^{**}$ of $\binom{X}{2}$.
By the definition of $\rtwkmt+1$ there exists
a set

$$H=\{x_{i_1},\ldots,x_{i_{k}}\}.$$
such that the first $k-1$ elements of it are a 2-homogenous set for $COL^{**}$.
Let the color of this 2-homogenous set be $\REDns$.
We show that $H$ (including $x_{i_k}$) is a 3-homogenous set for $COL$.
For notational convenience we show 
that $COL(x_{i_1},x_{i_2},x_{i_3})=\REDns$.
The proof for any $3$-set of $H$ is similar.

By the definition of $COL^{**}$ 
for all $y\in V_{i_2}$, $COL(x_{i_1},x_{i_2},y)=COL^{**}(x_{i_1},x_{i_2})=\REDns$.
In particular $COL(x_{i_1},x_{i_2},x_{i_3})=\REDns.$

We now see how large $n$ must be so that the construction be carried out.
Note that in stage $i$ $|V_i|$ be decreases by at most half, $i$ times.
Hence $|V_{i+1}|\ge \frac{|V_i|}{2^{i}}$.

Therefore
$$|V_i| \ge \frac{|V_1|}{2^{1+2+\cdots+(i-1)}}\ge \frac{n-1}{2^{(i-1)^2}}.$$

We want $|V_{\rtwkmt+1}|\ge 1$. 
It suffice so take 
$n=2^{{\rtwkmt}^2}+1$.

By Theorem~\ref{th:ramsey2}

$$\rtwkmt^2+1 \le (2^{2k-0.5\lg(k-2)})^2 \le 2^{4k-\lg(k-2)}.$$

Hence
$$
\rtkt\le \erv.
$$
\end{proof}

\begin{note}
A slightly better upper bound for $\rtkt$ can be obtained
by using Conlon's upper bound on $\rtwkt$ given in Note~\ref{no:ramsey2}.
\end{note}

\begin{note}
The proof of Theorem~\ref{th:ErdosRado3} generalizes to $c$-colors yielding 
$$\rtkc\le c^{c^{2ck-\log_c(k-2)+O(c)}}.$$
\end{note}

We state Ramsey's theorem on $a$-hypergraphs~\cite{Ramsey} 
(see also~\cite{GRS,RamseyInts}).

\begin{theorem}\label{th:ErdosRadoa}~
\begin{enumerate}
\item
For all $a\ge 2$, for all $k$, 
$\rakt \le  2^{\binom{\ramkmt+1}{a-1}}+a-2.$
\item
$\rtkt \le \TOW(1,4k-\lg(k-2))$.
\item
For all $a\ge 4$, for almost all $k$, 
$$\rakt \le \TOW(1,a-1,a-2,\ldots,3,4k-\lg (k-a+1)-4(a-3)).$$
\end{enumerate}
\end{theorem}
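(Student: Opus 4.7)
The plan is to generalize the Erdős-Rado proof of Theorem~\ref{th:ErdosRado3} from $3$-hypergraphs to $a$-hypergraphs, and then to iterate the resulting recurrence to obtain the tower bound in Part~3. The key insight is the same as for $a=3$: use PHP many times to build a $2$-coloring on $(a-1)$-subsets of a carefully chosen set of vertices, and then apply the $(a-1)$-ary Ramsey theorem only once at the end.

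For Part~1, I would run the following construction. Start with $V_0 = [n]$ and pick $x_1,\ldots,x_{a-2}$ to be the first $a-2$ elements (no halving yet, since no $(a-1)$-subset has been completed). Then, at each stage $i \ge a-1$, let $x_i$ be the least element of $V_{i-1}$, delete it, and for each of the $\binom{i-1}{a-2}$ new $(a-1)$-subsets $S = \{x_{j_1},\ldots,x_{j_{a-2}},x_i\}$ created by $x_i$, define a $2$-coloring $COL^*$ on the current $V_i$ by $COL^*(y) = COL(S \cup \{y\})$, apply PHP, replace $V_i$ by the majority-color half, and define $COL^{**}(S)$ to be that color. The usual ``KEY'' invariant is preserved: for every $(a-1)$-subset $S$ of $\{x_1,\ldots,x_i\}$ and every $y \in V_i$, we have $COL(S\cup\{y\}) = COL^{**}(S)$. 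Running this for $m = \ramkmt + 1$ stages (past the initial $a-2$) and applying the hockey-stick identity $\sum_{i=1}^{m} \binom{i-1}{a-2} = \binom{m}{a-1}$, it suffices to take $n = 2^{\binom{\ramkmt+1}{a-1}}+a-2$. Finally, apply the definition of $\ramkmt$ to $COL^{**}$ on $\{x_1,\ldots,x_m\}$ to get $k-1$ vertices homogeneous under $COL^{**}$, and then, exactly as in the $a=3$ proof, tack on one later $x$ (this is where the $+1$ in $\ramkmt+1$ is used) to obtain a $k$-set that is $a$-homogeneous under $COL$.

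For Part~2, I would simply specialize Part~1 to $a=3$ and substitute the bound $\rtwkmt+1 \le 2^{2k-0.5\lg(k-2)}$ from Theorem~\ref{th:ramsey2}; squaring and simplifying gives $\binom{\rtwkmt+1}{2} \le 2^{4k-\lg(k-2)}$, hence $\rtkt \le \TOW(1,4k-\lg(k-2))$. For Part~3, I would iterate Part~1: starting from Part~2, each application raises $\ramkmt$ (already a tower) to roughly the $(a-1)$-st power inside a binomial and then exponentiates with base $2$, which by parts~4 and~6 of Lemma~\ref{le:tow} prepends a new ``1'' to the tower and multiplies the second entry by $a-1$, while absorbing the $(a-1)!$ in the binomial denominator and the $+1$'s into the additive slack in the last entry. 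Tracking these corrections carefully across the $a-3$ iterations from $a=3$ up to general $a$ produces the stated tower $\TOW(1,a-1,a-2,\ldots,3,4k-\lg(k-a+1)-4(a-3))$, where the $-4(a-3)$ accumulates the slack spent at each of the $a-3$ iterations and the $k-a+1$ inside the $\lg$ comes from the fact that at the level bounding $\ramkmt$ we are really applying Part~2 with $k$ replaced by $k-(a-3)$.

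The substantive content is really just Part~1; Parts~2 and~3 are bookkeeping. The main obstacle I anticipate is the Part~3 bookkeeping: one must verify that after each iteration the inequalities provided by Lemma~\ref{le:tow} apply without requiring tighter slack than is available, and that the ``almost all $k$'' caveat absorbs the lower-order terms produced when pushing $(\cdot)^{a-1}$ and $(a-1)!^{-1}$ inside a tower. All other steps (the hockey-stick count, the PHP halving, the final application of $\ramkmt$) are routine generalizations of the argument already spelled out for $a=3$ in Theorem~\ref{th:ErdosRado3}.
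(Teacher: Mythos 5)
Your proposal matches the paper's proof in all essentials: Part~1 is the same Erd\H{o}s--Rado construction (pick the first $a-2$ vertices for free, then at each stage halve once per newly completed $(a-1)$-subset and record its PHP-majority color in $COL^{**}$, with the hockey-stick identity $\sum\binom{i-1}{a-2}=\binom{m}{a-1}$ giving the required $n$, and a single application of $\ramkmt$ at the end), and Parts~2 and~3 are the same specialization and iteration of that recurrence via Lemma~\ref{le:tow}. One small inaccuracy in your bookkeeping gloss on Part~3: the $-4(a-3)$ is not accumulated ``slack'' but simply the $4k\to 4(k-(a-3))$ translation that comes from replacing $k$ by $k-1$ at each of the $a-3$ downward steps (the actual slack --- absorbing the $+1$'s, the $(a-1)!$ denominator of the binomial, and the $+(a-2)$ --- is what the ``almost all $k$'' hypothesis is covering); this does not affect the correctness of the argument.
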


\begin{proof}

\noindent
1) Assume that $\ramkmt$ exists and $a\ge 2$.

\noindent
{\bf CONSTRUCTION}

\[
\begin{array}{rl}
x_1 =& 1 \cr
\vdots = & \vdots \cr
x_{a-2} =& a-2 \cr
V_{a-2}= & [n]-\{x_1,\ldots,x_{a-2}\}. \hbox{ We start indexing here for convenience.}\cr
\end{array}
\]

Let $a-1\le i\le \ramkmt+1$. Assume that $x_1,\ldots,x_{i-1},V_{i-1}$, and
$COL^{**}:\binom{\{x_1,\ldots,x_{i-1}\}}{a-1}\into \{\REDns,\BLUEns\}$ are defined.

\[
\begin{array}{rl}
x_{i} = & \hbox{ the least element of $V_{i-1}$ } \cr
V_{i} =&  V_{i-1} - \{x_{i}\} \hbox{ (We will change this set without changing its name). }\cr
\end{array}
\]

We define $COL^{**}(A\cup\{x_{i}\})$ for every $A\in \binom{\{x_1,\ldots,x_{i-1}\}}{a-1}$.
We will also define smaller and smaller sets $V_i$.

\noindent
For $A\in \binom{\{x_1,\ldots,x_{i-1}\}}{a-1}$
\begin{enumerate}
\item
$COL^*: V_{i}\into \{\REDns,\BLUEns\}$ is defined by $COL^*(y) = COL(A\cup \{y\})$.
\item
Let $V_{i}$ be redefined as the largest 1-homogeneous set for $COL^*$.
Note that $|V_{i}|$ decreases by at most half.
\item
$COL^{**}(A\cup \{x_{i}\})$ is the color of $V_{i}$.
\end{enumerate}

\noindent
KEY: For all $l\le i_1 < \cdots < i_a\le i$,
$COL(x_{i_1},\ldots,x_{i_a}) = COL^{**}(x_{i_1},\ldots,x_{i_{a-1}})$.

\noindent
{\bf END OF CONSTRUCTION}

When we derive upper bounds on $n$ 
we will show that the the construction can be carried out for $\ramkmt+1$ stages. For now assume the construction ends.  

We have vertices

$$X=\{x_1,x_2,\ldots,x_{\ramkmt+1}\}$$

\noindent
and a 2-coloring $COL^{**}$ of $\binom{X}{2}$.
By the definition of $\ramkmt+1$ there exists
a set

$$H=\{x_{i_1},\ldots,x_{i_{k}}\}.$$
such that the first $k-1$ elements of it are a $(a-1)$-homogenous set for $COL^{**}$.
Let the color of this $(a-1)$-homogenous set be $\REDns$.
We show that $H$ (including $x_{i_k}$) is a $a$-homogenous set for $COL$.
For notational convenience we show 
that $COL(x_{i_1},\ldots,x_{i_a})=\REDns$.
The proof for any $a$-set of $H$ is similar, including the case where the last vertex is $x_{i_k}$.

By the definition of $COL^{**}$ 
for all $y\in V_{i_2}$, $COL(x_{i_1},\ldots,x_{i_{a-1}},y)=COL^{**}(x_{i_1},\ldots,x_{i_{a-1}})=\REDns$.
In particular $COL(x_{i_1},\ldots,x_{i_a})=\REDns.$

We now see how large $n$ must be so that the construction can be carried out.
Note that 
during stage $i$ there will be 
$\binom{i}{a-2}$  times where 
$|V_i|$ decreases by at most half.
Hence $|V_{i+1}|\ge \frac{|V_i|}{2^{\binom{i}{a-2}}}$.

Therefore
$$
|V_i|\ge\frac{|V_{a-2}|}{2^{\binom{a-2}{a-2}+\binom{a-1}{a-2}+\binom{a}{a-2}+\cdots+\binom{i-1}{a-2}}}=\frac{n-a+2}{2^{\binom{i}{a-1}}}.
$$

We want $|V_{\ramkmt+1}|\ge 1$. 

Hence we need

$$
|V_{\ramkmt+1}|\ge\frac{n-a+2}{2^{\binom{\ramkmt+1}{a-1}}}\ge 1
$$

$$
n-a+2 \ge 2^{\binom{\ramkmt+1}{a-1}}
$$

Hence

$$
n\ge 2^{\binom{\ramkmt+1}{a-1}}+a-2.
$$

Therefore
$$\rakt \le  2^{\binom{\ramkmt+1}{a-1}}+a-2.$$

\noindent
2) This is a restatement of Theorem~\ref{th:ErdosRado3}.

\noindent
3) We use Lemma~\ref{le:tow} throughout this proof implicitly.
We will also use a weak form of the recurrence from Part 1, namely:
$$\rakt \le 2^{\ramkmt^{a-1}}.$$

We prove the bound on $\rakt$ for $a\ge 4$ by induction on $a$.

\noindent
{\bf Base Case: $a=4$:} 
By Part 2, $\rtkt \le \TOW(1,4k-\lg(k-2))$. Hence

$$\rfokt \le 2^{\rtkmt^2} \le \TOW(1,3,4k-\log(k-3)-4)=\TOW(1,3,4k-\log(k-3)-4\times(4-3)).$$

\noindent
{\bf Induction Step:} We assume 

\[
\begin{array}{rl}
\ramkmt \le & \TOW(1,a-2,\ldots,3,4(k-1)-\lg ((k-1)-(a-1)+1)-4(a-4))\cr
         =  & \TOW(1,a-2,\ldots,3,4k-4-\lg (k-a+1)-4(a-3)).\cr
\end{array}
\]

Hence

$$\rakt \le 2^{\ramkmt^{a-1}} \le  \TOW(1,a-1,a-2,\ldots,3,4k-\lg (k-a+1)-4(a-3)).$$
\end{proof}

\begin{corollary}
For all $a\ge 3$, for almost all $k$,
$\rakt\le \TOW(1,1,\ldots,1,4k)$ where there are $a-2$ 1's.
(This is often called {\it $2$ to the $2$ to the 2 $\ldots$, $a-2$ times
and then a $4k$ at the top.})
\end{corollary}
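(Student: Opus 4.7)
The plan is to derive this corollary directly from Theorem~\ref{th:ErdosRadoa} together with Lemma~\ref{le:tow} part 1. The argument splits according to whether $a=3$ or $a \ge 4$. The base case $a=3$ is essentially free: Theorem~\ref{th:ErdosRado3} gives $\rtkt \le \erv = \TOW(1,4k-\lg(k-2))$, which for almost all $k$ is at most $\TOW(1,4k)$, matching the claim with $a-2=1$ one.

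For $a \ge 4$ I would start from Theorem~\ref{th:ErdosRadoa} part 3, which yields
$$\rakt \le \TOW(1, a-1, a-2, \ldots, 3, T), \qquad T := 4k - \lg(k-a+1) - 4(a-3),$$
a tower with exactly $a-1$ entries. The target $\TOW(1,1,\ldots,1,4k)$ also has $a-1$ entries ($a-2$ ones followed by $4k$), so the task is to reduce each middle entry to a $1$ while keeping the topmost entry at most $4k$. I would apply Lemma~\ref{le:tow} part 1 iteratively from left to right: the first application replaces the $(a-1)$ at position $2$ by $1$ and adds $\lg(a-1)$ to the adjacent entry $a-2$, turning it into $a-2+\lg(a-1)$; the second application does the same for this modified entry at position $3$, adding $\lg(a-2+\lg(a-1))$ to position $4$; and so on through position $a-2$. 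After $a-3$ such applications, every middle entry is a $1$ and only a small quantity $C_a$ has been absorbed into $T$. Tracking the process, $C_a$ is bounded by the nested logarithm $\lg(3+\lg(4+\lg(5+\cdots+\lg(a-1))))$, which depends only on $a$ (and is $O(\log a)$).

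The final step is to verify $T + C_a \le 4k$. Because $T = 4k - \lg(k-a+1) - 4(a-3)$ and $C_a$ depends only on $a$, this reduces to showing $\lg(k-a+1) + 4(a-3) \ge C_a$. For every $a \ge 4$ the term $4(a-3)$ already exceeds the (iterated-log) constant $C_a$ by direct inspection, and $\lg(k-a+1) \ge 0$ for $k \ge a$, so the inequality holds for almost all $k$. I expect the main obstacle to be simply the bookkeeping for the nested-log expressions that accumulate at each position during the successive applications of Lemma~\ref{le:tow} part 1; but since there are only $a-3$ reduction steps and each wraps the current accumulation in one more logarithm, the total absorbed into $T$ remains bounded by a constant depending on $a$, and no deeper trick is needed.
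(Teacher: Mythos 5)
Your argument is correct, and it is precisely the implicit derivation the paper has in mind: the corollary is stated without proof immediately after Theorem~\ref{th:ErdosRadoa}, and the intended route is exactly to start from part 3 of that theorem (with part 2 handling $a=3$) and absorb the intermediate tower entries $a-1,a-2,\ldots,3$ into the top level via repeated use of Lemma~\ref{le:tow} part 1, then observe that the resulting nested-logarithm constant $C_a$ is dominated by the $4(a-3)$ slack already present in the exponent. Your bookkeeping of $C_a$ is fine (in fact $C_a$ is bounded by an absolute constant, not merely $O(\log a)$, since each layer wraps the running total in one more $\lg$; but the weaker bound suffices).
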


\begin{note}
The proof of Theorem~\ref{th:ErdosRadoa} easily generalizes to yield the following.
\begin{enumerate}
\item
For all $a\ge 2$, for all $k$, 
$\rakc \le  c^{\binom{\ramkmc+1}{a-1}}+a-2.$
\item
$\rtkc\le \TOW_c(1,2ck-\log_c(k-2)+O(c))$.
\item
For all $a\ge 4$, for almost all $k$, 
$$\rakc \le \TOW_{c}(1,a-1,a-2,\ldots,3,2ck-\log_c(k-a+1)+O(c)).$$
\end{enumerate}
\end{note}

\section{The Conlon-Fox-Sudakov Proof}\label{se:cfs}
Recall the following high level description of the \Erdosns-Rado proof:

\begin{itemize}
\item
Color an edge by using PHP. This cuts the number of nodes in half.
This is done $\rtwkmt+1$ times.
\item
After {\it all} the edges of a complete graph are colored we use Ramsey's theorem.
This will cut the number of nodes down by a log.
\end{itemize}

{\it Every} time we colored an edge we cut the number of vertices in half.
Could we color fewer edges? Consider the following scenario:

$COL^{**}(x_1,x_2)=\REDns$ and $COL^{**}(x_1,x_3)=\BLUEns$. Intuitively the edge from $x_2$ to $x_3$
might not be that useful to us.
{\it Therefore we will not color that edge!}

Two questions come to mind:

\bigskip

\noindent
{\bf Question:} How will we determine which edges are potentially useful?

\noindent
{\bf Answer:} We will associate to each $x_i$ a 2-colored 1-hypergraph $G_i$
that keeps track of which edges $(x_{i'},x_i)$ are colored, and if so what
they are colored. For example, if $COL^{**}(x_7,x_9)=\REDns$ then
$(7,\REDns)\in G_9$.
(We use the terminology {\it 2-colored 1-hypergraphs} and the notation $G_i$
so that when we extend this to the 
$a$-hypergraph Ramsey Theorem, in the appendix,
the similarity will be clear.)

We will have $x_1=1$ and $G_1=\es$.
Say we already have

$$x_1,\ldots,x_i$$

$$G_1,\ldots,G_i.$$

\noindent
Assume $i'<i$.
Assume that for each of 
$COL^{**}(x_1,x_i),\ldots,COL^{**}(x_{i'-1},x_i)$
we have either defined it or intentionally chose to not define it.
We are wondering if we should define $COL^{**}(x_{i'},x_i)$.
At this point 
the vertices of $G_i$ are a subsets of $\{1,\ldots,i'-1\}$.
If $G_i$ is equal (not just isomorphic) to $G_{i'}$ (as colored 1-hypergraphs)
then we will define $COL^{**}(x_{i'},x_i)$ and add 
$i'$ to $G_i$ with that color.
If $G_i$ is not equal to $G_{i'}$ 
then we will not define $COL^{**}(x_{i'},x_i)$.

\bigskip

\noindent
{\bf Question:} Since we only color some of the edges how will we use Ramsey's theorem?

\noindent
{\bf Answer:} We will not. Instead we go until one of the 1-hypergraphs has 
$k$ monochromatic points. Hence we will be using the 1-ary Ramsey Theorem.
(When we prove the $a$-hypergraph Ramsey theorem we will use the $(a-2)$-hypergraph Ramsey Theorem.)

We need a lemma that will help us in both the case of $c=2$ and
the case of general $c$.

\begin{lemma}\label{le:sigmac}
Let $S\subseteq [c]^\kstar$ be such that no string in $S$ has
$\ge k-1$ of any $i\in [c]$.
Then the following hold:
\begin{enumerate}
\item
$$
\sum_{\sigma\in S} |\sigma| \le \displaystyle k^{3/2-c/2}c^{c(k-1) + 2} \left(\frac{e}{\sqrt{2 \pi}} \right)^{c+1}$$
\item
If $c=2$ then the summation is bounded above by
$\bigl(\frac{e}{\sqrt{2 \pi}}\bigr)^3k^{1/2}2^{2k}$.
\end{enumerate}
\end{lemma}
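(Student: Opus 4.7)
The plan is to bound $\sum_{\sigma \in S}|\sigma|$ by injecting $S$ into the set of balanced strings of length $c(k-1)$ over $[c]$ and then applying Stirling's approximation to the resulting central multinomial. Since each $\sigma\in S$ has letter-count vector $\vec n=(n_1,\ldots,n_c)$ with $n_i\le k-2$, we have $|\sigma|=|\vec n|\le c(k-2)$, so immediately $\sum_{\sigma\in S}|\sigma|\le c(k-2)\cdot |S|$.

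For the injection, I propose defining $f\colon S\to\{\text{balanced strings of length }c(k-1)\text{ over }[c]\}$ by
\[
f(\sigma)=\sigma\cdot c\cdot 1^{\,k-1-n_1}\cdot 2^{\,k-1-n_2}\cdots (c-1)^{\,k-1-n_{c-1}}\cdot c^{\,k-2-n_c}.
\]
All exponents are nonnegative because $n_i\le k-2$, the length equals $c(k-1)$, and each letter appears exactly $k-1$ times (the single separator $c$ supplies the extra occurrence of letter $c$). To see $f$ is injective, one decodes $\sigma$ from $\tau=f(\sigma)$ by scanning right-to-left: strip the trailing block of $c$'s (its length recovers $n_c$), then the block of $(c-1)$'s (nonempty because $k-1-n_{c-1}\ge 1$), and so on down to the block of $1$'s; the next character is the separator $c$, and everything before it is $\sigma$. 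Consequently $|S|\le \binom{c(k-1)}{k-1,\ldots,k-1}$.

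The last step is to apply Stirling's approximation with explicit bounds $\sqrt{2\pi n}(n/e)^n\le n!\le e\sqrt n\,(n/e)^n$ to get
\[
\binom{c(k-1)}{k-1,\ldots,k-1}\le \frac{e\sqrt c\,c^{c(k-1)}}{(2\pi)^{c/2}(k-1)^{(c-1)/2}}.
\]
Combining this with $\sum_\sigma|\sigma|\le c(k-2)|S|$ and using $(k-2)(k-1)^{-(c-1)/2}\le k^{(3-c)/2}$ for $k$ sufficiently large, routine algebra yields the stated bound $k^{3/2-c/2}c^{c(k-1)+2}(e/\sqrt{2\pi})^{c+1}$; the residual constant check reduces to $\sqrt c\,e^c\ge \sqrt{2\pi}$, which holds for every $c\ge 1$. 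Part~2 is then just the specialization $c=2$ of Part~1, where the expression collapses exactly to $(e/\sqrt{2\pi})^3 k^{1/2}2^{2k}$.

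The main obstacle is designing the injection $f$ with unambiguous decoding: a naive padding $\sigma\mapsto \sigma\cdot 1^{k-1-n_1}\cdots c^{k-1-n_c}$ collapses distinct prefixes (e.g.\ $\epsilon$ and any initial segment of its padding). The key trick is to append the single separator $c$ immediately after $\sigma$ and compensate by shortening the trailing $c$-block by one; this forces every intermediate block of letters $1,\ldots,c-1$ to be nonempty, so the right-to-left block structure uniquely determines each $n_i$ and locates the boundary of $\sigma$.
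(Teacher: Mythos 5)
Your proof is correct and takes a genuinely different route from the paper's. The paper expands the sum $A=\sum\{|\sigma|: \sigma$ has at most $k-1$ of each letter$\}$ as an iterated multinomial sum, then carries out an induction on the number of inner sums using Pascal's second identity $\sum_{b=0}^n \binom{a+b}{b}=\binom{a+n+1}{n}$, peeling off one $j_i$ at a time and tracking the accumulated prefactors; Stirling enters only at the very end to bound $c^2k\,(ck)!/(c!\,k!^c)$. You instead bound $\sum_{\sigma\in S}|\sigma|$ by $c(k-2)\,|S|$ and control $|S|$ by an explicit injection of each string $\sigma$ (with letter counts $n_i\le k-2$) into a \emph{balanced} word of length $c(k-1)$, so that $|S|\le (c(k-1))!/((k-1)!)^c$, after which a single Stirling estimate finishes. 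Your separator trick (append a single $c$, then pad letters $1,\ldots,c-1$ to $k-1$ copies each and letter $c$ to only $k-2$ copies, forcing each block $1,\ldots,c-1$ to be nonempty) does make the decoding unambiguous for every $c\ge 2$, which is the only case the paper uses. The injection argument is shorter, more combinatorial, and avoids the inductive bookkeeping entirely; it also reaches a slightly smaller multinomial $\binom{c(k-1)}{k-1,\ldots,k-1}$ than the paper's $\binom{ck}{k,\ldots,k}$, so you actually prove something a touch tighter.

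One small correction to the final step. The isolated inequality you invoke, $(k-2)(k-1)^{-(c-1)/2}\le k^{(3-c)/2}$, is equivalent to $\frac{k-2}{k}\le\left(\frac{k-1}{k}\right)^{(c-1)/2}$ and this \emph{fails} once $c\ge 6$ (the right side is asymptotically $1-\frac{c-1}{2k}$ while the left is $1-\frac{2}{k}$). The conclusion is still right, but you need to carry the slack factor with you rather than check it separately: writing $m=k-1$, the ratio of your bound to the target simplifies to
$$\frac{m-1}{m+1}\cdot\left(1+\frac1m\right)^{(c-1)/2}\cdot\frac{\sqrt{2\pi}}{\sqrt c\, e^c}\le e^{(c-1)/2}\cdot\frac{\sqrt{2\pi}}{\sqrt c\, e^c}=\frac{\sqrt{2\pi}}{\sqrt c\, e^{(c+1)/2}}\le 1$$
for all $c\ge 2$ and $k\ge 2$, with no ``sufficiently large $k$'' caveat needed. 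So fold the two checks into one and the proof is clean.
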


\begin{proof}

Let
$$A = \sum \{ |\sigma| \ :\ \sigma \in [c]^*, \sigma \hbox{ contains at most } k-1 \hbox{ of any element} \}.$$

Grouping by the number of appearances of each element of $[c]$, we get
\[
A = \sum_{j_1 = 0}^{k-1} \cdots \sum_{j_c = 0}^{k-1} (j_1 + \ldots + j_c)
    \frac{(j_1 + \ldots + j_c)!}{j_1! \cdots j_c!}.
\]

We may split up the innermost sum to get $c$ different sums, each containing
a single $j_i$ in the summand. Since each of these sums is equal, we get

\[
A = \displaystyle
    c \cdot \sum_{j_1 = 0}^{k-1} j_1 \cdot \sum_{j_2 = 0}^{k-1} \cdots
    \sum_{j_c = 0}^{k-1} \frac{(j_1 + \ldots + j_c)!}{j_1! \cdots j_c!}.
\]

We split this up into the part which depends on $j_c$, and the part which
doesn't:
\begin{equation}\label{eq:basecase}
A = \displaystyle
    c \cdot \sum_{j_1 = 0}^{k-1} j_1 \cdot \sum_{j_2 = 0}^{k-1} \cdots
    \sum_{j_c = 0}^{k-1} \frac{(j_1 + \ldots + j_{c-1})!}{j_1! \cdots j_{c-1}!} \binom{j_1 + \ldots + j_c}{j_c}.
\end{equation}

	\paragraph{Claim} For all $\ell$, with $0 \le \ell \le c-1$,
	\[
	A \le \displaystyle c \cdot \sum_{j_1 = 0}^{k-1}
	      \frac{j_1}{\prod_{i=0}^{\ell-1} (j_1+ik)} \cdot \sum_{j_2 = 0}^{k-1}
	      \cdots \sum_{j_{c - \ell} = 0}^{k-1} B_\ell \cdot
	      \binom{j_1 + \ldots + j_{c-\ell} + \ell k}{j_{c-\ell}},
	\]
where
\[
B_\ell = \frac{(j_1 + \ldots + j_{c-\ell-1} + \ell k)!}
         {j_1! \cdots j_{c-\ell-1}! (k-1)!^{\ell}}
\]
does not depend on $j_{c - \ell}$. Note that, in the case $\ell = c-1$,
all the inner sums are gone, so we are left with
\[
A \le \displaystyle c \cdot \sum_{j_1 = 0}^{k-1}
      \frac{j_1}{\prod_{i=0}^{c-2} (j_1+ik)} \cdot B_{c-1} \cdot
      \binom{j_1 + (c-1) k}{j_1}.
\]

\noindent
{\bf Proof of Claim}

We will prove this by induction on $\ell$. The base case is
Equation~\ref{eq:basecase}.

For the inductive step, we need only to look at the innermost sum,
whose value we call $S$.

\[
\begin{array}{rcl}
S & = & \displaystyle \sum_{j_{c-\ell}=0}^{k-1} B_\ell \cdot
        \binom{j_1 + \ldots + j_{c-\ell} + \ell k}{j_{c-\ell}} \\
\noalign{\bigskip}
  & = & \displaystyle B_\ell \cdot \sum_{j_{c-\ell}=0}^{k-1}
        \binom{j_1 + \ldots + j_{c-\ell} + \ell k}{j_{c-\ell}} \\
\noalign{\bigskip}
  & = & \displaystyle
        B_\ell \cdot \binom{j_1 + \ldots + j_{c-\ell-1} + \ell k + k}{k-1}. \\
\end{array}
\]

Here we used Pascal's $2^{\hbox{nd}}$ Identity:
\[
\sum_{b=0}^n \binom{a+b}{b} = \binom{a+n+1}{n}.
\]
with $a = j_1 + \ldots + j_{c-\ell-1} + \ell k$.

Writing our answer out in terms of factorials, we get
\[
\begin{array}{rcl}
S & = & \displaystyle \frac{(j_1 + \ldots + j_{c-\ell-1} + \ell k)!}
        {j_1! \cdots j_{c-\ell-1}! (k-1)!^{\ell}} \cdot
        \frac{(j_1 + \ldots + j_{c-\ell-1} + (\ell+1) k)!}
        {(k-1)!(j_1 + \ldots + j_{c-\ell-1} + \ell k + 1)!} \\
\noalign{\bigskip}
  & = & \displaystyle \frac{(j_1 + \ldots + j_{c-\ell-1} + \ell k)!}
        {(j_1 + \ldots + j_{c-\ell-1} + \ell k + 1)!} \cdot
        \frac{(j_1 + \ldots + j_{c-\ell-1} + (\ell+1) k)!}
        {j_1! \cdots j_{c-\ell-1}! (k-1)!^{\ell+1}} \\
\noalign{\bigskip}
  & = & \displaystyle
        \left( \frac{1}{j_1 + \ldots + j_{c-\ell-1} + \ell k + 1} \right)
        \cdot \frac{(j_1 + \ldots + j_{c-\ell-1} + (\ell+1) k)!}
        {j_1! \cdots j_{c-\ell-1}! (k-1)!^{\ell+1}} \\
\noalign{\bigskip}
  & \le & \displaystyle
        \left( \frac{1}{j_1 + \ell k} \right) \cdot
        \frac{(j_1 + \ldots + j_{c-\ell-1} + (\ell+1) k)!}
        {j_1! \cdots j_{c-\ell-1}! (k-1)!^{\ell+1}} \\
\noalign{\bigskip}
  & = & \displaystyle
        \left( \frac{1}{j_1 + \ell k} \right)
        \left( \frac{(j_1 + \ldots + j_{c-\ell-2} + (\ell+1) k)!}
        {j_1! \cdots j_{c-\ell-2}! (k-1)!^{\ell+1}} \right)
        \binom{j_1 + \ldots + j_{c-\ell-1} + (\ell+1) k}{j_{c - \ell - 1}}. \\
\noalign{\bigskip}
  & = & \displaystyle
        \left( \frac{1}{j_1 + \ell k} \right)
        B_{\ell + 1}
        \binom{j_1 + \ldots + j_{c-\ell-1} + (\ell+1) k}{j_{c - \ell - 1}}. \\
\end{array}
\]

Reinserting this value $S$ back into the formula for $A$, and factoring
the fraction $\frac{1}{j_1 + \ell k}$ to the outermost sum, we get the
desired result.

The induction stops when we hit the outermost sum, where the format of the
summand changes.

\noindent
{\bf End of Proof of Claim}

Using this claim, with $\ell = c-1$, we get the bound
\[
A \le \displaystyle c \cdot \sum_{j_1 = 0}^{k-1}
      \frac{j_1}{\prod_{i=0}^{c-2} (j_1+ik)} \cdot B_{c-1} \cdot
      \binom{j_1 + (c-1) k}{j_1}.
\]
Note the first fraction: the $j_1$ in the numerator cancels with the $i=0$
term of the denominator. As for the rest of the terms, they reach their maxima
when $j_1=0$.

Renaming $j_1$ to be $n$ and filling in the value of $B_{c-1}$, we get
\allowdisplaybreaks
\[
\begin{array}{rcl}
A & \le & \displaystyle c \cdot \sum_{n = 0}^{k-1}
          \frac{n}{\prod_{i=0}^{c-2} (n+ik)}
          \cdot \frac{((c-1)k)!}{(k-1)!^{c-1}} \cdot \binom{n + (c-1) k}{n} \\
\noalign{\bigskip}
  & \le & \displaystyle \frac{c}{\prod_{i=1}^{c-2} ik} \cdot
          \frac{((c-1)k)!}{(k-1)!^{c-1}}
          \sum_{n = 0}^{k-1} \binom{n + (c-1) k}{n} \\
\noalign{\bigskip}
  &  =  & \displaystyle \frac{c}{(c-2)! k^{c-2}} \cdot
          \frac{((c-1)k)!}{(k-1)!^{c-1}}
          \sum_{n = 0}^{k-1} \binom{n + (c-1) k}{n} \\
\noalign{\bigskip}
  &  =  & \displaystyle \frac{c}{(c-2)! k^{c-2}} \cdot
          \frac{((c-1)k)!}{(k-1)!^{c-1}} \cdot \binom{ck}{k-1} \\
\noalign{\bigskip}
  &  =  & \displaystyle \frac{c}{(c-2)! k^{c-2}} \cdot
          \frac{((c-1)k)!}{(k-1)!^{c-1}} \cdot
          \frac{(ck)!}{(k-1)!((c-1)k + 1)!} \\
\noalign{\bigskip}
  & \le & \displaystyle \frac{c}{(c-2)! k^{c-2}} \cdot
          \frac{((c-1)k)!}{((c-1)k + 1)!} \cdot
          \frac{(ck)!}{(k-1)!^c} \\
\noalign{\bigskip}
  &  =  & \displaystyle \frac{c}{(c-2)! k^{c-2}} \cdot
          \frac{1}{(c-1)k + 1} \cdot
          \frac{(ck)!}{(k-1)!^c} \\
\noalign{\bigskip}
  & \le & \displaystyle \frac{c^2 k}{c!} \cdot
          \frac{(ck)!}{k!^c} \\
\end{array}
\]

Now we use the bounds associated with Stirling's approximation:
\[
\sqrt{2 \pi n} \left(\frac{n}{e}\right)^n \le n! \le
e \sqrt{n} \left(\frac{n}{e}\right)^n
\]

\[
\begin{array}{rcl}
A & \le & \displaystyle \frac{c^2 k}{c!} \cdot
          \frac{e (ck)^{1/2} (ck)^{ck}}{(2 \pi k)^{c/2} k^{ck}} \\
\noalign{\bigskip}
  & \le & \displaystyle
          \frac{e}{c!} \cdot c^{ck + 5/2} k^{3/2 - c/2} (2 \pi)^{-c/2} \\
\noalign{\bigskip}
  & \le & \displaystyle
          \frac{e}{\sqrt{2 \pi c} (c/e)^c} \cdot
          c^{ck + 5/2} k^{3/2 - c/2} (2 \pi)^{-c/2} \\
\noalign{\bigskip}
  & \le & \displaystyle c^{c(k-1) + 2} k^{3/2 - c/2}
          \left(\frac{e}{\sqrt{2 \pi}} \right)^{c+1}. \\
\end{array}
\]

\end{proof}

The following proof is by Conlon-Fox-Sudakov~\cite{conlonfoxsud}; however,
we do a more careful analysis with the aide of Lemma~\ref{le:sigmac}.2.

\begin{theorem}\label{th:cfs3}~
For all $k$, $\rtkt \le \cfsv$ where
$B=(\frac{e}{\sqrt{2\pi}})^3 \sim 1.28$.
\end{theorem}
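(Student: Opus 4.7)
The plan is to build a sequence $x_1, x_2, \ldots$ in $[n]$ together with $2$-colored $1$-hypergraphs $G_1, G_2, \ldots$, where $G_i$ records the subset of earlier vertices $x_{i'}$ (with $i' < i$) for which a value $COL^{**}(x_{i'}, x_i)$ has been committed, along with that value. The departure from the \Erdosns-Rado construction is that, when processing $x_i$ and iterating over $i' = 1, 2, \ldots, i-1$, we define $COL^{**}(x_{i'}, x_i)$ via PHP (halving $V$) only when the current partial $G_i$ equals the final $G_{i'}$ as \emph{labeled} $1$-hypergraphs; otherwise we skip that $i'$ altogether, saving the factor of two.

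First I would argue the homogeneity payoff. We halt at the first stage at which some $G_i$ contains $\ge k - 1$ vertices of one color, say $\REDns$, appearing at positions $x_{i_1}, \ldots, x_{i_{k-1}}$. The claim is that $H = \{x_{i_1}, \ldots, x_{i_{k-1}}, x_i\}$ is $3$-homogeneous red for $COL$. The equality $G_i = G_{i_q}$ at the moment $x_{i_q}$ was processed by stage $i$ propagates the red labels backward: for every $p < q$, the pair $(i_p, \REDns)$ must already appear in $G_{i_q}$, so $COL^{**}(x_{i_p}, x_{i_q}) = \REDns$ is defined. Combined with the ``KEY'' property that $COL(x_{i_p}, x_{i_q}, y) = COL^{**}(x_{i_p}, x_{i_q})$ for every $y$ still in the pool at stage $i_q$, every triple drawn from $H$ (including those involving $x_i$) comes out red, because the third vertex is always selected from a later and hence smaller pool.

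Next I would bound the total number of halvings. To each $G_i$ associate the string $\sigma_i \in \{R,B\}^*$ obtained by reading off its vertex colors in increasing order of vertex label. I would then establish two facts. \emph{First}, the labeled $G_i$'s are pairwise distinct: if $i < j$ and $G_i = G_j$, then at stage $j$'s iteration $i' = i$ we would successfully add vertex $i$ to $G_j$, making its final form strictly larger than $G_i$. \emph{Second}, the color sequence of $G_i$ together with the fixed sequence $G_1, G_2, \ldots$ deterministically recovers the vertex labels of $G_i$, because the next vertex added to $G_i$ is always the smallest index $i'$ for which the current partial $G_i$ equals $G_{i'}$. Combining the two facts shows that the strings $\sigma_i$ are pairwise distinct.

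Up to the halting stage each $\sigma_i$ contains at most $k - 2$ copies of either color, so $\{\sigma_i\}$ sits inside the set $S$ of Lemma~\ref{le:sigmac}.2. The total number of halvings equals $\sum_i |\sigma_i|$, which is therefore at most $B(k-1)^{1/2} 2^{2k}$ (with the sharpened analysis; the lemma applied directly gives the slightly weaker $B k^{1/2} 2^{2k}$). Since the pool $V$ must remain nonempty throughout, taking $n = \cfsv$ suffices. The main obstacle is the second distinctness fact above: one has to nail down the algorithmic rule by which a color sequence together with $G_1, G_2, \ldots$ recovers the vertex labels, and then verify that the resulting family of strings really does satisfy the hypothesis of Lemma~\ref{le:sigmac}.2; the rest is bookkeeping.
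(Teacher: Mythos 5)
Your proposal is correct and takes essentially the same approach as the paper: same construction of the $G_i$'s, same stopping rule, same homogeneity argument, and the same application of Lemma~\ref{le:sigmac}.2 to bound the total number of halvings. The only cosmetic difference is that you decompose the injectivity of $G_i \mapsto \sigma_i$ (the paper's $\squash(G_i)$) into two facts---pairwise distinctness of the labeled $G_i$'s and recoverability of labels from the color string---whereas the paper proves $\squash(G_{i_1}) \ne \squash(G_{i_2})$ by a single direct induction; these are the same argument repackaged.
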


\begin{proof}
Let $n$  be a number to be determined.
Let $COL$ be a 2-coloring of $\binom{[n]}{3}$.

We define a finite sequence of vertices $x_1,x_2,\ldots, x_L$ where
we will bound $L$ later.
For every $1\le i\le L$ we will also define $G_i$, 
a 2-colored 1-hypergraph. We will represent $G_i$ as a subset of $\nat\times \{\REDns,\BLUEns\}$.
For example, $G_i$ could be $\{ (1,\REDns), (4,\BLUEns), (5,\REDns) \}$.
The notation $G_i = G_i \cup \{ (12,\REDns) \}$ means that we 
add the edge $\{12\}$ to $G_i$ and color
it $\REDns$. When we refer to the vertices of the $G_i$ 1-hypergraph we will often refer
to them as {\it 1-edges} since (1) in a 1-hypergraph,
vertices are edges, and (2) the proof will generalize to $a$-hypergraphs more
easily. We use the term 1-edges so the reader will remember they are vertices also.

The construction will stop when one of the $G_i$ has 
a 1-homogenous set of size $k-1$ (more commonly called a set of
$k-1$ monochromatic points).
We will later show that this must happen.

Recall the definition of a 1-homogeneous set relative to a coloring
of a 1-hypergraph from the note following Definition \ref{de:homog}.
We will use it here.

Here is the intuition: 
Let $x_1=1$ and $x_2=2$.
Let $G_1=\es$.
The vertices $x_1,x_2$
induces the following coloring of $\{3,\ldots,n\}$.
$$COL^*(y) = COL(x_1,x_2,y).$$
Let $V_1$ be a 1-homogeneous set of size at least $\frac{n-2}{2}$.
We will only work within $V_1$ from now on.
Let $COL^{**}(x_1,x_2)$ be the color of $V_1$.
Let $G_2=\{(1,COL^{**}(x_1,x_2)\}$.

Let $x_3$ be the least vertex in $V_1$.
The number $x_3$ induces {\it two} colorings of $V_1 - \{x_3\}$:
$$COL_{1,3}^*(y) = COL(x_1,x_3,y)$$
$$COL_{2,3}^*(y) = COL(x_2,x_3,y)$$

Let $V_2$ be a 1-homogeneous for $COL_{1,3}^*$ of size $\frac{|V_1|-1}{2}$.
Let $COL^{**}(x_1,x_3)$ be the color of $V_2$.
We also set 
$G_3=\{(1,COL^{**}(x_1,x_3))\}$, though we will may add to $G_3$ later.
Restrict $COL_{2,3}^*$ to elements of $V_2$, though still call it $COL_{2,3}^*$.
We will only work within $V_2$ from now on.

Will we color $(x_2,x_3)$? If $G_2=G_3$ (that is, if they both colored 1 the same) then YES.
If not then we won't. This is the KEY--- every time we color an edge we
divide $V$ in half. We will not always color an edge- only the promising ones.
Hence $V$ will  not decrease as quickly as was done in the proof of Theorem~\ref{th:ErdosRado3}.

If $G_2=G_3$ then we reuse the variable name $V_2$ to be 
a 1-homogeneous for $COL_{2,3}^*$ of size
at least  $\frac{|V_2|}{2}$.
Let $COL^{**}(x_2,x_3)$ be the color of $V_2$.
Add $(2,COL^{**}(x_2,x_3))$ to $G_3$.

If $G_2\ne G_3$ then we do not color $(x_2,x_3)$ and do not add anything to $G_3$.

In the actual construction we will not define $COL^{**}$ since the information it contains
will be stored in the 2-colored 1-hypergraphs $G_i$.

We describe the construction formally.

\begin{definition}\label{de:agree1}
Let $G_{i_1},G_{i_2}$ be 2-colored 1-hypergraphs. 
Let $j\in\nat$.
\begin{enumerate}
\item
$G_{i_1}$ and $G_{i_2}$ {\it agree on $j$} if,
either 
(1) $G_{i_1}$ and $G_{i_2}$ both have 1-edge $j$ and color it the same or,
(2)  neither $G_{i_1}$ nor $G_{i_2}$ has 1-edge $j$.
\item
$G_{i_1}$ and $G_{i_2}$ {\it agree on $\{1,\ldots,j\}$} if $G_{i_1}$ and $G_{i_2}$
agree on all of the 1-edges in the set $\{1,\ldots,j\}$.
\item
$G_{i_1}$ and $G_{i_2}$ {\it disagree on $j$} if
either 
(1) $G_{i_1}$ and $G_{i_2}$ both have 1-edge $j$ and color it differently or 
(2) one of them has 1-edge $j$ but the other one does not.
\end{enumerate}
\end{definition}

\noindent
{\bf CONSTRUCTION}

\[
\begin{array}{rl}
x_1 =& 1 \cr
x_2 =& 2   \cr
G_1= & \es \cr
V_1= &[n] - \{x_1,x_2\}\cr
COL^*(y)= & COL(x_1,x_2,y) \hbox{ for all } y\in V_1 \cr
V_2 = & \hbox{ the largest 1-homogeneous set for $COL^*$ }\cr
G_2 = & \{ (1,\hbox{ the color of $V_2$}) \}\cr
\end{array}
\]

\noindent
KEY: for all $y\in V_2$, $COL(x_1,x_2,y)$ is the color of $1$ in $G_2$.

Let $i\ge 2$, and assume that 
$V_{i-1}$, $x_1,\ldots,x_{i-1}$,
$G_1,\ldots,G_{i-1}$ are defined.
If $G_{i-1}$ has a 1-homogenous set of size $k-1$ then stop (yes, $k-1$- this is not a typo).
Otherwise proceed.

\[
\begin{array}{rl}
G_i = & \es \hbox{ (This will change.) } \cr
x_{i} = & \hbox{ the least element of $V_{i-1}$ } \cr
V_{i} =&  V_{i-1} - \{x_{i}\} \hbox{ (We will change this set without changing its name.) }\cr
\end{array}
\]

We will add some colored 1-edges to $G_i$.
We will also define smaller and smaller sets $V_i$.
We will keep the variable name $V_{i}$ throughout.

\bigskip

\noindent
For $j=0$ to $i-1$
\begin{enumerate}
\item
If $G_j=G_i$ then proceed, else go to the next value of $j$.
(Note that we are asking if $G_j=G_i$ at a time when $G_i$'s vertex set
is a subset of $\{1,\ldots,j-1\}$.)
\item
$COL^*: V_{i}\into \{\REDns,\BLUEns\}$ is defined by $COL^*(y) = COL(x_j,x_{i},y)$.
\item
$V_{i}$ is the largest 1-homogeneous set for $COL^*$.
Note that $|V_{i}|$ decreases by at most half.
\item
$G_i = G_i \union \{ (j, \hbox{ color of $V_i$} ) \}$ 
\end{enumerate}

\noindent
KEY: 
Let $1\le i_1<i_2 \le i$ such that $i_1$ is a 1-edge of $G_{i_2}$. Let $c_{i_1}$ 
be such that $(i_1,c_{i_1})\in G_{i_2}$.
For all $y\in V_{i}$, $COL(x_{i_1},x_{i_2},y)=c_{i_1}$. 

\noindent
{\bf END OF CONSTRUCTION}

When we derive upper bounds on $n$ we will show that the construction ends.  For now assume the construction ends.

When the construction ends we have a $G_L$ that
has a 1-homogenous set of size $k-1$.
We assume the color is $\REDns$.
Let $\{i_1 < i_2 <\cdots < i_{k-1} \}$ be the 1-homogenous set.
Define $i_k=L$.
We show that 
$$H= \{ x_{i_1},\ldots,x_{i_k} \}$$
is a 3-homogenous set with respect to the original coloring $COL$.
For notational convenience we show 
that $COL(x_{i_1},x_{i_2},x_{i_3})=\REDns$.
The proof for any 3-set of $H$ is similar, 
even for the case where the last point is $x_L$.

Look at $G_{i_2}$. Since $i_2$ is a 1-edge in $G_L$ 
we know that
$G_{i_2}$ and $G_L$ agree on all 1-edges in $\{1,\ldots,i_2-1\}$.
Since $(i_1,\REDns)\in G_L$ and $i_1\le i_2-1$, $(i_1,\REDns)\in G_{i_2}$.
Hence, for all $y\in V_{i_2}$, $COL(x_{i_1},x_{i_2},y)=\REDns$.
In particular $COL(x_{i_1},x_{i_2},x_{i_3})=\REDns$.

We now establish bounds on $n$.

\begin{definition}
Let $G=V$ be a 2-colored 1-hypergraph on vertex set $V=\{L_1<\cdots<L_m\}$ and edge set $E$.
Define $\squash(G)$ to be $G'=(V',E')$, the following 2-colored 1-hypergraph:
\begin{itemize}
\item
The vertex sets $V'=\{1,\ldots,m\}$.
\item
For each edge $\{L_{i}\}$  in $E$
the edge $\{i\}$ is in $E'$.
\item
The color of $\{i\}$ in $G'$ is the color of 
$\{L_i\}$ in $G$.
\end{itemize}
\end{definition}

\noindent
{\bf Claim 1:} For all $2\le i_1<i_2$, $\squash(G_{i_1})\ne\squash(G_{i_2})$.

\noindent
{\bf Proof of Claim 1:}
Assume, by way of contradiction, that $i_1<i_2$ and $\squash(G_{i_1})=\squash(G_{i_2})$.
Let $G_{i_1}$  have vertex set $U_1$.
Let $f_1$ be the isomorphism that maps $U_1$ to the vertex set of $\squash(G_{i_1}$).
Note that $f_1$  is order preserving.
If $f_1$ is applied to a number not in $U_1$ then the result is undefined.
Let $U_2$ and $f_2$ be defined similarly for $G_{i_2}$.

We will prove that, for all $1\le j\le i_1-1$, 
(1) $f_1$ and $f_2$ agree on $\{1,\ldots,j\}$,
(2) $G_{i_1}$ and $G_{i_2}$ agree on $\{1,\ldots,j\}$.
The proof will be by induction on $j$.

\noindent
{\bf Base Case: $j=1$}. 
Since $2\le i_1,i_2$, the edge $E=\{1\}$ is in both $G_{i_1}$ and $G_{i_2}$,
hence $f_1(1)=f_2(1)$.
If the color of $E$ is different in $G_{i_1}$ and $G_{i_2}$ then 
$\squash(G_{i_1})\ne \squash(G_{i_2})$.
Hence the color of $E$ is the same in both graphs.
Hence $G_{i_1}$ and $G_{i_2}$ agree on $\{1\}$.

\noindent
{\bf Induction Step:} Assume that $G_{i_1}$ and $G_{i_2}$ agree on $\{1,2,\ldots,j-1\}.$
Assume that $f_1$ and $f_2$ agree on $\{1,\ldots,j-1\}$.
We use these assumptions without stating them.
Look at what happens when $G_{i_1}$ ($G_{i_2}$) has to decide what to do with $j$.

If $G_j$ and $G_{i_1}$ agree on \hbox{$\{1,\ldots,j-1\}$} then, since $j<i_1$,
$G_j$ also agrees with $G_{i_2}$ on $\{1,\ldots,j-1\}$.
Hence edge $E=\{j\}$ will be put into both
$G_{i_1}$ and $G_{i_2}$. Hence $j$ will be a vertex in both $G_{i_1}$ and $G_{i_2}$
so $f_1(j)=f_2(j)$. 
Since $f_1$ and $f_2$ agree on $\{1,\ldots,j\}$
and $\squash(G_{i_1})=\squash(G_{i_2})$, $E$ must be the same
color in $G_{i_1}$ and $G_{i_2}$. 
Hence $G_{i_1}$ and $G_{i_2}$ agree on $\{1,\ldots,j\}$.

If $G_j$ does not agree with $G_{i_1}$ on $\{1,\ldots,j-1\}$ 
then there must be an edge $E\in \{1,\ldots,j-1\}$
such that $G_j$ and $G_{i_1}$ disagree on $E$.
Hence $G_j$ and $G_{i_2}$ disagree on $E$.
Thus $j$ will not be made a vertex of $G_{i_1}$ or $G_{i_2}$ ever.
Hence both $f_1(j)$ and $f_2(j)$ are undefined.
The edge $E$ is not added to $G_{i_1}$ or $G_{i_2}$ in stage $j$.
Since $G_{i_1}$ and $G_{i_2}$ agree on $\{1,\ldots,j-1\}$
they agree on $\{1,\ldots,j\}$.

We now know that $G_{i_1}$ and $G_{i_2}$ agree on $\{1,\ldots,i_1-1\}$.
Note that $G_{i_1}$ only has vertices in $\{1,\ldots,i_1-1\}$.
Look at stage $i_1$ in the construction of $G_{i_2}$.
Since $G_{i_1}$ agrees with $G_{i_2}$ on $\{1,\ldots,i_1-1\}$
$i_1$ is an vertex in $G_{i_2}$.
At that point $G_{i_2}$ will have more vertices then $G_{i_1}$ hence
$\squash(G_{i_1})\ne\squash(G_{i_2})$. This is a contradiction.

\noindent
{\bf End of Proof of Claim 1}

We now bound $L$, the length of the sequence.
The sequence 
$G_1,G_2,\ldots,$ will end when some $G_i$ has 
$2k-3$ points in it (so at least $k-1$ must be the same color)
or earlier. 
For all $i$, map $G_i$ to $\squash(G_i)$. This mapping is 1-1 by Claim~1.
Hence the length of the sequence is bounded by the
number of 2-colored 1-hypergraphs on {\it an initial segment of}
$\{1,\ldots,2k-3\}$ so $L\le 2^0 + \cdots + 2^{2k-3} \le 2^{2k-2}-1$.
We have shown the construction terminates.

Strangely enough, this is not quite what we care about when we are bounding $n$.
We care about the number of {\it edges} in all of the $G_i$'s since each
edge at most halves the number of vertices.

By Lemma~\ref{le:sigmac}, the number of edges in all of the $G_i$ is
bounded by $B(k-1)^{1/2}2^{2k}$
where $B=\bigl(\frac{e}{\sqrt{2\pi}}\bigr)^3$.
Hence the number of times $|V|$ is cut in at most half is bounded by
that same quantity.
Hence it suffices to take $n = 2^{B(k-1)^{1/2}2^{2k}}$.

\end{proof}

\begin{note}
For $c\ge 2$ let $B_c= \bigl(\frac{e}{\sqrt{2\pi}}\bigr)^{c+1}$.
The proof of Theorem~\ref{th:cfs3} generalize to $c$ colors yielding
$\rtkc \le c^{B_c (k-1)^{1/2}c^{ck}}$.
\end{note}

\begin{theorem}\label{th:cfsa}~
Throughout this theorem $B=(\frac{e}{\sqrt{2\pi}})^3 \sim 1.28$.
\begin{enumerate}
\item
$\rtkt \le \TOW(B(k-1)^{1/2},2^{2k})$.
\item
$\rfokt \le \TOW(1,3B(k-2)^{1/2},2^{2k-2})$.
\item
$\rfikt \le \TOW(1,4,3B(k-3)^{1/2},2^{2k-4})$.
\item
For all $a\ge 6$, for almost all $k$, 
$$\rakt \le \TOW(1,a-1,a-2,\ldots,4,3B(k-a+2)^{1/2},2^{2k-2a+6})$$
\end{enumerate}
\end{theorem}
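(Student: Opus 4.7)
The plan is to combine the Conlon-Fox-Sudakov bound on $\rtkt$ from Theorem~\ref{th:cfs3} with the Erd\H{o}s-Rado recurrence $\rakt \le 2^{\binom{\ramkmt+1}{a-1}} + a-2$ from Theorem~\ref{th:ErdosRadoa}.1, together with the arithmetic lemma on the $\TOW$ function (Lemma~\ref{le:tow}). Part 1 requires no work: $\cfsv$ is, by the very definition of $\TOW$, equal to $\TOW(B(k-1)^{1/2},2^{2k})$, so it is a restatement of Theorem~\ref{th:cfs3}.

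For Part 2, I would apply the recurrence to obtain $\rfokt \le 2^{\binom{\rtkmt+1}{3}}+2 \le 2^{(\rtkmt+1)^3/6}+2$, then plug in Part 1 with $k$ replaced by $k-1$, giving $\rtkmt\le \TOW(B(k-2)^{1/2},2^{2k-2})$. Cubing and invoking Lemma~\ref{le:tow}.2 transforms this into $\TOW(3B(k-2)^{1/2},2^{2k-2})$; the factor $1/6$ and the additive $+2$ are easily absorbed (they only help us). Finally Lemma~\ref{le:tow}.5 prepends a $1$, yielding the claimed $\TOW(1,3B(k-2)^{1/2},2^{2k-2})$. Part 3 follows the same recipe one level up: use $\rfikt\le 2^{\binom{\rfokmt+1}{4}}+3$, substitute Part 2 for $\rfokmt$, and raise to the $4$th power. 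Because the top-level entry of the tower from Part 2 is already $1$, Lemma~\ref{le:tow}.2 turns $1$ into $4$, producing exactly the claimed leading entry.

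Part 4 is an induction on $a\ge 6$. Assuming the bound for $a-1$ evaluated at $k-1$, the hypothesis reads $\ramkmt \le \TOW(1,a-2,a-3,\ldots,4,3B(k-a+2)^{1/2},2^{2k-2a+6})$, where the inner entries land correctly because $(k-1)-(a-1)+2 = k-a+2$ and $2(k-1)-2(a-1)+6 = 2k-2a+6$. The recurrence gives $\rakt \le 2^{(\ramkmt+1)^{a-1}/(a-1)!}+(a-2)$. Raising the tower to the $(a-1)$-st power via Lemma~\ref{le:tow}.2 replaces the leading $1$ by $a-1$, and then $2^{(\cdot)}$ via Lemma~\ref{le:tow}.5 prepends a fresh $1$, producing the displayed tower.

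The main obstacle, and the reason for the \emph{almost all $k$} qualifier, is the careful bookkeeping needed to absorb the lower-order junk: the factor $1/(a-1)!$ from the binomial coefficient, the additive $a-2$, the difference between $\ramkmt$ and $\ramkmt+1$, and the small slack in Stirling-style estimates. Each of these shows up as a factor $(1+\delta)$ somewhere inside the tower, and Lemma~\ref{le:tow}.3--4 let us pay for each $(1+\delta)$ by bumping the topmost entry by $\delta$. For $k$ large compared to $a$, these increments are negligible compared to $2^{2k-2a+6}$, so the topmost entry of the tower remains as stated. I would carry out this bookkeeping once in the inductive step and simply appeal to it for Parts 2 and 3.
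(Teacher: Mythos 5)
Your proposal is correct and follows the same route as the paper: Part 1 is a restatement of Theorem~\ref{th:cfs3}, and Parts 2--4 come from feeding the Erd\H{o}s--Rado recurrence into Part~1, then iterating and inducting, with Lemma~\ref{le:tow}.2 and Lemma~\ref{le:tow}.5 doing the tower bookkeeping. The paper's own write-up is a three-sentence appeal to the weak recurrence $\rakt \le 2^{\ramkmt^{a-1}}$; your version fills in exactly the details it elides, including the index arithmetic confirming that $(k-1)-(a-1)+2=k-a+2$ and $2(k-1)-2(a-1)+6=2k-2a+6$, and the observation that the factor $1/(a-1)!$ and the additive $a-2$ are absorbed for $k$ large, which is why the statement is for almost all $k$.
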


\begin{proof}

Part 1 is a restatement of Theorem~\ref{th:cfs3}.

From Theorem~\ref{th:ErdosRadoa} we have $\rakt \le 2^{\ramkmt^{a-1}}.$
We apply this recurrence to Part 1 to get Part 2, and to Part 2 to get Part 3.
We then use it to get Part 4 by induction. 

\end{proof}

\begin{note}
For $c\ge 2$ let $B_c= \bigl(\frac{e}{\sqrt{2\pi}}\bigr)^{c+1}$.
The proof of Theorem~\ref{th:cfsa} generalize to $c$ colors yielding the following.
\begin{enumerate}
\item
$\rtkc \le \TOW_c(B_c(k-1)^{1/2},c^{ck})$.
\item
$\rfoktc \le \TOW_c(1,3B(k-2)^{1/2},c^{ck-c})$.
\item
$\rfiktc \le \TOW_c(1,4,3B(k-3)^{1/2},c^{ck-2c})$.
\item
For all $a\ge 6$, for almost all $k$, 
$$\rakc \le \TOW_c(1,a-1,a-2,\ldots,4,3B(k-a+2)^{1/2},c^{ck-ac+3c})$$
\end{enumerate}
\end{note}

\section{Open Problems}

The best known lower bounds are attributed to \Erdos and Hajnal in~\cite{GRS}.
They are as follows:
\begin{enumerate}
\item
$\rtkt \ge 2^{\Omega(k^2)}$ by a simple probabilistic argument.
\item
$\rakt \ge \TOW(1,\ldots,1,\Omega(k^2))$ ($a-1$ 1's) by the lower bound on $\rtkt$ and
the stepping up lemma.
\end{enumerate}

For 4 colors the situation is very different. \Erdos and Hajnal showed that
$$\rtkf \ge 2^{2^{\Omega(k)}}.$$

Obtaining matching upper and lower bounds for the hypergraph Ramsey
Numbers seems to be a hard open problem.
We suspect that a bound of the form $\rakt \le 2^{2^{k+o(k)}}$ can be obtained.

\section{Acknowledgments}

We would like to thank David Conlon whose talk on this topic at RATLOCC 2011 inspired
this paper. We would also like to thank David Conlon (again), Jacob Fox and Benny Sudakov for their paper~\cite{conlonfoxsud}
which contains the new proof of the 3-hypergraph Ramsey Theorem.
We also thank Jessica Shi and Sam Zbarsky who helped us clarify some of the results.

\appendix
\section{Extending Conlon-Fox-Sudakov to $a$-Hypergraph Ramsey}

In this appendix we extend the Conlon-Fox-Sudakov proof to
prove the $a$-hypergraph Ramsey Theorem. Unfortunately it does
not yield better bounds on $\raktreal$.
We include it in the hope that in the future someone may modify the construction,
or our analysis of it, to yield better bounds.

In order to prove an upper bound on $\rakt$) (and $\rakc$)
we need a lemma similar to Lemma~\ref{le:sigmac}.
The lemma below gives a crude estimate. It is possible that a more careful bound
would lead to a better analysis of the construction and hence to a better bound
on the hypergraph Ramsey numbers.

\begin{lemma}\label{le:hypera}
Let $S$ be the subset of $c$-colored complete $(a-2)$-hypergraphs 
whose vertex sets are an initial
segments of $\nat$ and that have no ($a-2$)-homogenous set
of size $k-1$. Then
$$\sum_{(V,E,COL)\in S} |E| \le {\rammkmc}^{a-1}c^{{\rammkmc}^{a-2}}.$$
\end{lemma}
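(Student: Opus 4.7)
The plan is to bound the sum by a direct counting argument, using that the condition ``no $(a-2)$-homogeneous set of size $k-1$'' forces the vertex set to be small.

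First I would observe that if $(V,E,COL) \in S$, then by the very definition of the Ramsey number, $|V| < R(a-2,k-1,c) = \rammkmc$. Set $M = \rammkmc$. Then every element of $S$ has vertex set $\{1,2,\ldots,m\}$ for some $m$ with $0 \le m \le M-1$. For a fixed such $m$, the edge set is determined (it is all of $\binom{[m]}{a-2}$), so the number of $c$-colorings is at most $c^{\binom{m}{a-2}}$, and each such coloring contributes $\binom{m}{a-2}$ edges to the sum. (We are overcounting, since not every coloring avoids a homogeneous set of size $k-1$, but that is fine for an upper bound.)

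Next I would crudely bound $\binom{m}{a-2} \le m^{a-2} \le M^{a-2}$ for all $m < M$. Plugging in gives
$$\sum_{(V,E,COL)\in S} |E| \;\le\; \sum_{m=0}^{M-1} \binom{m}{a-2}\, c^{\binom{m}{a-2}} \;\le\; \sum_{m=0}^{M-1} M^{a-2}\, c^{M^{a-2}} \;\le\; M \cdot M^{a-2}\, c^{M^{a-2}} \;=\; M^{a-1}\, c^{M^{a-2}}.$$
Substituting $M = \rammkmc$ yields the claimed bound.

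There is really no main obstacle here: the argument is almost pure bookkeeping. The only place to lose ground (and potentially the reason the resulting upper bounds on $\rakc$ are no better than the \Erdosns-Rado bounds) is the wasteful step of ignoring the homogeneity constraint and using the trivial estimate $\binom{m}{a-2}\le M^{a-2}$. A more delicate counting of the colorings of $\binom{[m]}{a-2}$ that actually avoid an $(a-2)$-homogeneous set of size $k-1$, in the spirit of Lemma~\ref{le:sigmac}, might well shave significant factors off of the final Ramsey bounds; this is precisely the improvement left open in the appendix.
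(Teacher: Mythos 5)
Your proposal is correct and follows essentially the same approach as the paper: bound the vertex count by $\rammkmc$, bound the number of edges per hypergraph by the $(a-2)$th power of the vertex count, crudely overcount the number of colorings by $c^{(\text{vertex count})^{a-2}}$, and sum. Your final remark about where slack is lost also matches the commentary in the appendix.
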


\begin{proof}

The largest size of $V$ such that a $c$-colored $(a-2)$-hypergraph $(V,E)$ has no ($a-2$)-homogenous
set of size $k-1$ is bounded above by $\rammkmc$. Hence we want to bound.

$$
\sum_{i=1}^{\rammkmc}\sum_{(V,E,COL)\in S,|V|=i}|E|\le\sum_{i=1}^{\rammkmc}\sum_{(V,E,COL)\in S,|V|=i}i^{a-2}.
$$

The number of $c$-colored $(a-2)$-hypergraphs on $i$ vertices is bounded above by $c^{i^{a-2}}$.
Hence we can bound the above sum by 

\[
\begin{array}{rl}
\sum_{i=1}^{\rammkmc} c^{i^{a-2}}i^{a-2}& \le \rammkmc 2^{{\rammkmc}^{a-2}}{\rammkmc}^{a-2} \cr
                                        & \le \rammkmc^{a-1}  2^{{\rammkmc}^{a-2}}
\end{array}
\]

\end{proof}

\begin{theorem}\label{th:cfsaold}
For all $a\ge 3$, for all $k\ge 3$
$$\rakt \le 2^{\rammkmt^{a-1} 2^{{\rammkmt}^{a-2}}}.$$
\end{theorem}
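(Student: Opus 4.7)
The plan is to imitate the proof of Theorem~\ref{th:cfs3}, with 2-colored 1-hypergraphs $G_i$ replaced by 2-colored $(a-2)$-hypergraphs, and with the 1-ary pigeonhole used to terminate replaced by the $(a-2)$-hypergraph Ramsey theorem (whose bound is precisely $\rammkmt$). I build a sequence of vertices $x_1,\ldots,x_L$ and a nested chain $V_1 \supseteq V_2 \supseteq \cdots$ of subsets of $[n]$, together with a 2-colored $(a-2)$-hypergraph $G_i$ on some vertex set contained in $\{1,\ldots,i-1\}$. The intended meaning of $G_i$ is: whenever $A=\{j_1<\cdots<j_{a-2}\}$ is a colored $(a-2)$-edge of $G_i$, the color of $A$ records the (constant) value of $COL(x_{j_1},\ldots,x_{j_{a-2}},x_i,y)$ for every $y$ still living in $V_i$.

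At stage $i$, I would enumerate the candidate $(a-2)$-subsets $A=\{j_1<\cdots<j_{a-2}\}$ of $\{1,\ldots,i-1\}$ in some fixed order and, for each $A$, test a compatibility condition generalizing the condition $G_j = G_i$ from the 3-hypergraph case; roughly, one requires $G_{j_{a-2}}$ to equal the current state of $G_i$ on the relevant initial segment, so that induction will propagate the earlier colorings forward. When the test passes, I set $COL^*(y)=COL(x_{j_1},\ldots,x_{j_{a-2}},x_i,y)$, shrink $V_i$ to its largest $COL^*$-monochromatic subset (halving it), and add the edge $A$ to $G_i$ with that color. The construction stops when some $G_L$ acquires an $(a-2)$-homogeneous set $H'$ of size $k-1$; by the intended meaning of $G_L$, the set $H = \{x_j : j\in H'\}\cup\{x_L\}$ is then $a$-homogeneous for $COL$ and has size~$k$.

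To bound $n$, I need two ingredients. First, a squash lemma analogous to Claim~1 of Theorem~\ref{th:cfs3}: if $i_1<i_2$ then $\squash(G_{i_1})\neq\squash(G_{i_2})$. The argument is the same inductive ``walk through the construction'' as in the 3-hypergraph case, verifying simultaneously that the two isomorphisms to the squashes agree on $\{1,\ldots,j\}$ and that $G_{i_1},G_{i_2}$ agree on the coloring of every $(a-2)$-edge living in $\{1,\ldots,j\}$. Second, by $(a-2)$-ary Ramsey each $G_i$ that the construction produces lies in the class $S$ of Lemma~\ref{le:hypera} (no $(a-2)$-homogeneous set of size $k-1$), and the squash injection shows that the total number of edges added across all stages is bounded by $\sum_{G\in S}|E(G)|\le \rammkmt^{a-1}2^{\rammkmt^{a-2}}$. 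Since each added edge halves $V$ at most once, the choice $n = 2^{\rammkmt^{a-1}2^{\rammkmt^{a-2}}}$ suffices.

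The main obstacle is formulating the compatibility test at stage $i$ so that (i) whenever it passes, the resulting $(a-1)$-coloring $COL^*$ really does extend consistently with the information already recorded in $G_{j_1},\ldots,G_{j_{a-2}}$, and (ii) the induction in the squash claim still closes. For $a=3$ the test reduces to a single equality $G_j=G_i$ because only one prior hypergraph is involved, but for general $a$ one must coordinate $a-2$ prior hypergraphs at once, and the bookkeeping that lets Claim~1's inductive step go through is the delicate point. Everything else (the $a$-homogeneity verification, the halving count, and the final substitution into Lemma~\ref{le:hypera}) is routine once the construction is correctly set up.
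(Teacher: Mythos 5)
Your plan matches the paper's proof in both structure and ingredients: build $x_1,\ldots,x_L$, carry $2$-colored $(a{-}2)$-hypergraphs $G_i$, halve $V$ on each added edge, stop when some $G_L$ has an $(a{-}2)$-homogeneous set of size $k-1$, inject the $G_i$ into a universe of labeled hypergraphs via $\squash$, and sum edges via Lemma~\ref{le:hypera}. The bound comes out as claimed.

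The gap you flag honestly — the compatibility test for adding an $(a{-}2)$-edge — is indeed the crux, and your tentative formulation (test only $G_{j_{a-2}}$ against $G_i$) is not what the paper does and is not obviously sufficient. The paper's test is: processing $J=\{j_1<\cdots<j_{a-2}\}\subseteq[i-1]$ in increasing order of $\max J$, add $J$ to $G_i$ iff \emph{for every} $j\in J$, $G_j$ and $G_i$ agree on $\{1,\ldots,j-1\}$ in the sense of Definition~\ref{de:agreea} (agreeing on presence/absence and color of every $(a{-}2)$-edge in $\binom{[j-1]}{a-2}$). Checking only the top index would leave the KEY invariant (that when $J$ is a recorded edge with color $c_J$, all surviving $y$ satisfy $COL(\{x_{j_1},\ldots,x_{j_{a-2}},x_i,y\})=c_J$) unsupported for the lower indices of $J$, and the inductive step of the squash claim would not close: you need to know, when comparing $G_{i_1}$ and $G_{i_2}$, that each $G_j$'s agreement with $G_{i_1}$ on $\{1,\ldots,j-1\}$ transfers to $G_{i_2}$, and this has to be tested per $j$, not only for $j_{a-2}$.

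A second point you omit: Lemma~\ref{le:hypera} is stated for \emph{complete} $(a{-}2)$-hypergraphs, so before invoking it you need the paper's Claim 2, that every $G_i$ is complete on its own vertex set. This follows from the compatibility test above: if $j_1,\ldots,j_{a-2}$ are all vertices of $G_i$, then each $G_{j_\ell}$ agrees with $G_i$ on $\{1,\ldots,j_\ell-1\}$ (that is how $j_\ell$ became a vertex), so the test for the edge $\{j_1,\ldots,j_{a-2}\}$ passes and it is added. Without completeness the count of possible squashes would be $(c+1)^{\binom{i}{a-2}}$ rather than $c^{\binom{i}{a-2}}$, and the constant in the final bound would change. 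Everything else in your sketch (the $a$-homogeneity check, the halving count, the injectivity of $\squash$) is correct and lines up with the paper.
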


\begin{proof}

Let $n$  be a number to be determined.
Let $COL$ be a 2-coloring of $\binom{[n]}{a}$.

We define a finite sequence of vertices $x_1,x_2,\ldots, x_L$ where
we will bound $L$ later.
For every $1\le i\le L$ we will also define $G_i$, 
a 2-colored $(a-2)$-hypergraph. We will represent $G_i$ as a subset of $\binom{\nat}{a-2}\times \{\REDns,\BLUEns\}$.
For example, if $a=6$, $G_i$ could be 

$$\{ (\{1,2,4,5\},\REDns), (\{1,3,4,9\},\BLUEns), (\{4,5,6,10\},\REDns) \}.$$

The notation $G_i = G_i \cup \{ \{12,13,19,99\},\REDns)) \}$ means that we add the edge 
$\{12,13,19,99\}$ to $G_i$ and color it $\REDns$ in $G_i$.

The construction will stop when one of the $G_i$ has 
a $(a-2)$-homogenous set of size $k-1$.
We will later show that this must happen.

\begin{definition}\label{de:agreea}
Let $G_{i_1},G_{i_2}$ be 2-colored $(a-2)$-hypergraphs. 
Let $J\in\binom{\nat}{a-2}$.
\begin{enumerate}
\item
$G_{i_1}$ and $G_{i_2}$ {\it agree on $J$} if
either 
(1) $G_{i_1}$ and $G_{i_2}$ both have edge $J$ and color it the same or
(2)  neither $G_{i_1}$ nor $G_{i_2}$ has edge $J$.
\item
$G_{i_1}$ and $G_{i_2}$ {\it agree on $\{1,\ldots,j\}$} if $G_{i_1}$ and $G_{i_2}$
agree on all of the edges in $\binom{[j]}{a-2}$.
\item
$G_{i_1}$ and $G_{i_2}$ {\it disagree on $J$} if
either (1) $G_{i_1}$ and $G_{i_2}$ both have edge $J$ and color it differently or (2)
one of them has edge $J$ but the other one does not.
\end{enumerate}
\end{definition}

\noindent
{\bf CONSTRUCTION}

\[
\begin{array}{rl}
x_1 =& 1 \cr
x_2 =& 2   \cr
\vdots = & \vdots \cr
x_{a-1}=& a-1 \cr
G_1= & \es \cr
G_2= & \es \cr
\vdots & \vdots \cr
G_{a-2} = &\es \cr
V_{a-2}= & [n]-\{x_1,\ldots,x_{a-1} \}. \hbox{ We start indexing here for convenience.} \cr
COL^*(y)= & COL(x_1,x_2,\ldots,x_{a-1},y) \hbox{ for all } y\in V_{a-2}\cr
V_{a-1} = & \hbox{ the largest $(a-2)$-homogeneous set for $COL^*$ }\cr
G_{a-1} = & ( \{ 1,\ldots,a-2\} ,\hbox{ the color of $V_{a-1}$) } \cr
\end{array}
\]

The $G_i$'s will be 2-colored $(a-2)$-hypergraphs.

\noindent
KEY: for all $y\in V_{a-1}$, $COL(x_1,\ldots,x_{a-1},y)$ is the color of 
$\{1,\ldots,a-2\}$ in $G_{a-1}$.

Let $i\ge a-1$, and assume that 
$V_{i-1}$, 
$x_1,\ldots,x_{i-1}$, and
$G_1,\ldots,G_{i-1}$ are defined.
If $G_{i-1}$ has an $(a-2)$-homogenous set of size $k-1$ then stop (yes $k-1$- this is not a typo).  Otherwise proceed.

\[
\begin{array}{rl}
G_{i} = & \es \hbox{ (This will change.) } \cr
x_{i} = & \hbox{ the least element of $V_{i-1}$ } \cr
V_{i} =&  V_{i-1} - \{x_{i}\} \hbox{ (We will change this set without changing its name.) }\cr
\end{array}
\]

We will add colored $(a-2)$-edges to $G_i$.
We will also define smaller and smaller sets $V_{i}$.
We will keep the variable name $V_{i}$ throughout.

\bigskip

In the next step we will, for all $J\in \binom{[i-1]}{a-2}$, consider
adding $J$ to $G_i$. The order in which we consider the $J$ matters.
Assume the order first considers each edge whose maximum entry is $a-2$,
then each edges with maximum entry is $a-1$, etc, until the maximum entry is $i-1$.

\noindent
For $J\in \binom{[i-1]}{a-2}$
\begin{enumerate}
\item
If for every $j\in J$, $G_j$ and $G_{i}$ agree on $\{1,\ldots,j-1\}$ then proceed, otherwise 
go to the next $J$.
(Note that when edge $J$ is being considered all of the edges $J'$ with $\max(J')<\max(J)$
have already been decided upon. Hence if $J$ becomes an edge of $G_i$ then it will
always be the case that, for every $j\in J$, $G_j$ and $G_i$ agree on $\{1,\ldots,j-1\}$.
\item
$COL^*: V_{i}\into \{\REDns,\BLUEns\}$ is defined by $COL^*(y) = COL(J\cup\{x_{i},y\})$.
\item
$V_{i}$ is the largest 1-homogeneous set for $COL^*$.
Note that $|V_{i}|$ decreases by at most half.
\item
$G_i = G_i \cup \{(J, \hbox{ the color of $V_i$ })\}$
\end{enumerate}

\noindent
KEY: Let $A\in \binom{[i-1]}{a-2}$ and $b>\max(A)$ such that
$A$ is an $(a-2)$-edge of $G_{i}$.
Let $c_A$ be such that $(A,c_A)\in G_{i}$.
For all $y\in V_i$, $COL(A\cup\{x_b,y\})=c_A$.

\noindent
{\bf END OF CONSTRUCTION}

When we derive upper bounds on $n$ we will show that the construction ends.  For now assume the construction ends.

When the construction ends we have a $G_L$ that
has a $(a-2)$-homogenous set of size $k-1$.
We assume the color is $\REDns$.
Let $\{i_1 < i_2 <\cdots < i_{k-1} \}$ be the $(a-2$)-homogenous set.
Define $i_k=L$.
We show that 
$$H= \{ x_{i_1},\ldots,x_{i_k} \}$$
is a $a$-homogenous set with respect to the original coloring $COL$.
For notational convenience we show 
that $COL(x_{i_1},\ldots,,x_{i_a})=\REDns$.
The proof for any $a$-set of $H$ is similar, 
even for the case where the last point is $x_L$.

Look at $G_{i_{a-1}}$. Since $i_{a-1}$ is a vertex  in $G_L$ 
we know that
$G_{i_{a-1}}$ and $G_L$ agree on \hbox{$\{1,\ldots,i_{a-1}-1\}$.}
Since $(i_{a-1},\REDns)\in G_L$ and $i_1,\ldots,i_{a-2} \le i_2-1$, 
$(\{i_1,\ldots,i_{a-2}\},\REDns)\in G_{i_{a-1}}$.
Hence, for all $y\in V_{i_{a-1}}$, $COL(x_{i_1},\ldots,x_{i_{a-2}},x_{i_{a-1}},y)=\REDns$.
In particular $COL(x_{i_1},\ldots,x_{i_a})=\REDns$.

We now establish bounds on $n$.

\begin{definition}
Let $G$ be a 2-colored ($a-2$)-hypergraph on vertex set $V=\{L_1<\cdots<L_m\}$ and edge set $E$.
Define $\squash(G)$ to be $G'=(V',E')$, the following 2-colored ($a-2)$-hypergraph:
\begin{itemize}
\item
The vertex sets $V'=\{1,\ldots,m\}$.
\item
For each edges $\{L_{i_1},\ldots,L_{i_{a-2}}\}$  in $E$
the edge $\{i_1,\ldots,i_{a-2}\}$ is in $E'$.
\item
The color of $\{i_1,\ldots,i_{a-2}\}$ in $G'$.
is the color of 
$\{L_{i_1},\ldots,L_{i_{a-2}}\}$ in $G$.
\end{itemize}
\end{definition}

\noindent
{\bf Claim 1:} For all $a-1\le i_1<i_2$, $\squash(G_{i_1})\ne\squash(G_{i_2})$.

\noindent
{\bf Proof of Claim 1:}
Assume, by way of contradiction, that $a-1\le i_1<i_2$ and $\squash(G_{i_1})=\squash(G_{i_2})$.
Let $G_{i_1}$ have vertex set $U_1$ and 
let $f_1$ be the isomorphism that maps $U_1$ to the vertex set of 
$\squash(G_{i_1}$).
Note $f_1$ is order preserving and,
if $f_1$ is applied to a number not in $U_1$, then
the result is undefined.
Define $U_2$ and $f_2$ for $G_{i_2}$ similarly.

We will prove that, for all $1\le j\le i_1-1$, 
(1) $f_1$ and $f_2$ agree on $\{1,\ldots,j\}$,
(2) $G_{i_1}$ and $G_{i_2}$ agree on $\{1,\ldots,j\}$.
The proof will be by induction on $j$.

\noindent
{\bf Base Case: $j\in\{1,2,\ldots,a-2\}$}. 
Since $a-1\le i_1,i_2$
the edge $E=\{1,2,\ldots,a-2\}$ is in both $G_{i_1}$ and $G_{i_2}$; therefore,
$f_1(1)=f_2(1)$, $\ldots$, $f_1(a-2)=f_2(a-2)$.
If the color of $E$ is different in $G_{i_1}$ and $G_{i_2}$ then 
$\squash(G_{i_1})\ne \squash(G_{i_2})$.
Hence the color of $E$ is the same in both graphs.
Thus we have that $G_{i_1}$ and $G_{i_2}$ agree on $\{1,\ldots,a-2\}$.

\noindent
{\bf Induction Step:} Assume that $G_{i_1}$ and $G_{i_2}$ agree on $\{1,2,\ldots,j-1\}.$
Assume that $f_1$ and $f_2$ agree on $\{1,\ldots,j-1\}$.
We use these assumptions without stating them throughout.
Look at what happens when $G_{i_1}$ ($G_{i_2}$) has to decide what to do with $j$.

If $G_j$ and $G_{i_1}$ agree on \hbox{$\{1,\ldots,j-1\}$} then, since $j<i_1$,
$G_j$ also agrees with $G_{i_2}$ on $\{1,\ldots,j-1\}$.
Hence the edge $\{1,2,\ldots,a-3,j\}$ will be put into both
$G_{i_1}$ and $G_{i_2}$. Hence $j$ will be a vertex in both $G_{i_1}$ and $G_{i_2}$
so $f_1(j)=f_2(j)$. Let $E\in \binom{[j]}{a-2}$ such that $j\in E$.
If for every vertex $j'$ of $E$,
$G_{j'}$ and $G_{i_1}$ agree on $\{1,\ldots,j'-1\}$ then, since $j'<i_1$,
$G_{j'}$ also agrees with $G_{i_2}$ on $\{1,\ldots,j'-1\}$.
Hence $E$ will be in both $G_{i_1}$ and $G_{i_2}$.
Since $f_1$ and $f_2$ agree on $\{1,\ldots,j\}$
and $\squash(G_{i_1})=\squash(G_{i_2})$, $E$ must be the same
color in $G_{i_1}$ and $G_{i_2}$. Hence 
every edge put into $G_{i_1}$
in stage $j$ is also in $G_{i_2}$ and with the same color.
By a similar argument we can show that
every edge put into $G_{i_2}$
in stage $j$ is also in $G_{i_1}$ and with the same color.
Hence $G_{i_1}$ and $G_{i_2}$ agree on $\{1,\ldots,j\}$.

If $G_j$ does not agree with $G_{i_1}$ on \hbox{$\{1,\ldots,j-1\}$} then there must be an edge $E\in \binom{[j-1]}{a-2}$
such that $G_j$ and $G_{i_1}$ disagree on $E$.
Hence $G_j$ and $G_{i_2}$ disagree on $E$.
Thus $j$ will not be made a vertex of $G_{i_1}$ or $G_{i_2}$ ever.
Hence both $f_1(j)$ and $f_2(j)$ are undefined.
No new edges are added to $G_{i_1}$ or $G_{i_2}$ in stage $j$
hence, since $G_{i_1}$ and $G_{i_2}$ agree on $\{1,\ldots,j-1\}$
they agree on $\{1,\ldots,j\}$.

We now know that $G_{i_1}$ and $G_{i_2}$ agree on $\{1,\ldots,i_1-1\}$.
Note that $G_{i_1}$ only has vertices in $\{1,\ldots,i_1-1\}$.
Look at stage $i_1$ in the construction of $G_{i_2}$.
Since $G_{i_1}$ agrees with $G_{i_2}$ on $\{1,\ldots,i_1-1\}$,
$i_1$ will be a vertex of $G_{i_1}$.
At that point $G_{i_2}$ will have more vertices then $G_{i_1}$ hence
$\squash(G_{i_1})\ne\squash(G_{i_2})$. This is a contradiction.

\noindent
{\bf End of Proof of Claim 1}

\noindent
{\bf Claim 2:} All of the $G_i$ are complete ($a-2$)-hypergraphs.

\noindent
{\bf Proof of Claim 2:}

Let $i_1 < i_2 < \cdots < i_{a-2}$ be vertices of $G_i$.
We will show that $\{i_1,\ldots,i_{a-2}\}$ is an edge in $G_i$.

For all $1\le j\le a-2$, since $i_j$ is a vertex of $G_i$
we know that $G_{i_j}$ and $G_i$ agree on $\{1,\ldots,i_j-1\}$.
Hence, in stage $i$, this will be noted and
$\{i_1,\ldots,i_{a-2}\}$ will be added to $G_i$.

\noindent
{\bf End of Proof of Claim 2}

We now bound $L$, the length of the sequence.
The sequence 
$G_1,G_2,\ldots,$ will end when some $G_i$ has 
$\rammkmt$ vertices (since by the definition of $\rammkmt$
there will be a homogenous set of size ($k-1$) or earlier.
For all $i\ge a-2$ map $G_i$ to $\squash(G_i)$. This mapping is 1-1 by Claim~1.
Hence the length of the sequence is bounded by the 
$a-3$ plus  the
number of 2-colored $(a-2)$-hypergraphs on {\it an initial segment of}  $\{1,\ldots,\rammkmt\}$,
so $L\le a-3 + 2^0 + \cdots + 2^{\rammkmt} \le 2^{\rammkmt+1}+a-4$.
We have shown the construction terminates.

Strangely enough, this is not quite what we care about when we are bounding $n$.
We care about the number of {\it edges} in all of the $G_i$'s since each
edge at most halves the number of vertices.

By Lemma~\ref{le:hypera} the number of edges in all of the $G_i$'s is
bounded above by $${\rammkmt}^{a-1}2^{{\rammkmt}^{a-2}}.$$
Hence the number of times that the number of vertices are decreased by 
at most half is bounded by this same quantity.
Therefore it suffices to take $n = 2^{{\rammkmt}^{a-1}2^{{\rammkmt}^{a-2}}}.$
Hence
$$\rakt \le 2^{\rammkmt^{a-1} 2^{{\rammkmt}^{a-2}}}.$$
\end{proof}

\pagebreak


\begin{thebibliography}{10}

\bibitem{ramseyupper}
D.~Conlon.
\newblock A new upper bound for diagonal {R}amsey numbers.
\newblock {\em Annals of Mathematics}, 2009.
\newblock \url{http://www.dpmms.cam.ac.uk/~dc340}.

\bibitem{conlonfoxsud}
D.~Conlon, J.~Fox, and B.~Sudakov.
\newblock Hypergraph {R}amsey numbers.
\newblock {\em Journal of the American Mathematical Society}, pages 247--266,
  2010.
\newblock \url{www.dpmms.cam.ac.uk/~dc340}.

\bibitem{ErdosRado}
P.~{Erd\H{o}s} and R.~Rado.
\newblock Combinatorial theorems on classifications of subsets of a given set.
\newblock {\em Proceedings of the London Mathematical Society}, 2:417--439,
  1952.

\bibitem{ramseynotes}
W.~Gasarch.
\newblock Ramsey's theorem on graphs, 2005.
\newblock \url{http://www.cs.umd.edu/~gasarch/mathnotes/ramsey.pdf}.

\bibitem{VDWbook}
W.~Gasarch, C.~Kruskal, and A.~Parrish.
\newblock Van der {W}aerden's theorem: Variants and applications.
\newblock \url{http://www.cs.umd.edu/~gasarch/vdw/vdw.html}.

\bibitem{GRS}
R.~Graham, B.~Rothschild, and J.~Spencer.
\newblock {\em {R}amsey {T}heory}.
\newblock Wiley, 1990.

\bibitem{RamseyInts}
B.~Landman and A.~Robertson.
\newblock {\em {R}amsey Theory on the integers}.
\newblock AMS, 2004.

\bibitem{Ramsey}
F.~Ramsey.
\newblock On a problem of formal logic.
\newblock {\em Proceedings of the London Mathematical Society}, 30:264--286,
  1930.
\newblock Series 2. Also in the book {\it Classic Papers in Combinatorics}
  edited by Gessel and Rota. Also
  \url{http://www.cs.umd.edu/~gasarch/ramsey/ramsey.html}.

\bibitem{ramseylower}
J.~Spencer.
\newblock Ramsey's theorem--a new lower bound.
\newblock {\em Journal of Combinatorial Theory, Series~A}, pages 108--115,
  1975.
\newblock
  \url{http://www.sciencedirect.com/science/article/pii/0097316575900710}.

\bibitem{VDW}
B.~van~der Waerden.
\newblock Beweis einer {B}audetschen {V}ermutung.
\newblock {\em Nieuw Arch. Wisk.}, 15:212--216, 1927.
\newblock This article is in Dutch and I cannot find it online.

\end{thebibliography}
\end{document}